\documentclass[12pt]{amsart}
\usepackage[mathscr]{eucal}
\usepackage{amsmath,amsfonts}
\usepackage[all]{xy}
\usepackage{color}

\parskip=\smallskipamount

\hoffset -2cm
\voffset -1cm
\textwidth 16.5truecm
\textheight 22.5truecm

\newtheorem{theorem}{Theorem}[section]
\newtheorem{proposition}[theorem]{Proposition}
\newtheorem{corollary}[theorem]{Corollary}
\newtheorem{lemma}[theorem]{Lemma}

\theoremstyle{definition}

\newtheorem{examples}[theorem]{Examples}

\newtheorem{remark}[theorem]{Remark}
\newtheorem{remarks}[theorem]{Remarks}

\makeatletter
\@addtoreset{equation}{section}
\makeatother


\newcommand{\CC}{{\mathbb C}}
\newcommand{\NN}{{\mathbb N}}

\newcommand{\RR}{{\mathbb R}}

\newcommand{\cB}{{\mathcal B}}
\newcommand{\cC}{{\mathcal C}}

\newcommand{\cE}{{\mathcal E}}

\newcommand{\cH}{{\mathcal H}}

\newcommand{\cL}{{\mathcal L}}

\newcommand{\cP}{{\mathcal P}}
\newcommand{\cQ}{{\mathcal Q}}

\newcommand{\cS}{{\mathcal S}}
\newcommand{\cT}{{\mathcal T}}

\newcommand{\bA}{{\mathbf A}}
\newcommand{\bB}{{\mathbf B}}


\newcommand{\Ra}{\Rightarrow}
\newcommand{\ran}{\operatorname{Ran}}
\newcommand{\rank}{\operatorname{rank}}
\newcommand{\sgn}{\operatorname{sgn}}
\newcommand{\ra}{\rightarrow}

\newcommand{\tr}{\operatorname{tr}}
\let\phi=\varphi
\newcommand{\iac}{\mathrm{i}}
\renewcommand{\ker}{\operatorname{Null}}

\renewcommand{\Re}{\operatorname{Re}}

\newcommand{\lin}{\operatorname{Lin}}
\newcommand{\nr}[1]{\vspace{0.1ex}\noindent\hspace*{12mm}\llap{\textup{(#1)}}}


\title[Interpolation for Completely Positive Maps]{An Interpolation Problem for 
Completely Positive Maps on Matrix Algebras: Solvability and Parametrisation}

\author{C\u alin Ambrozie}\thanks{The first named author's research 
was supported by the grants IAA 100190903 of GAAV (RVO:67985840), and 
CNCSIS UEFISCDI PN-II-ID-PCE-2011-3-0119}

\address{Institute of Mathematics - Romanian Academy, PO Box 1-764, RO 014700 
Bucharest, Romania \emph{and} 
Institute of Mathematics of the Czech Academy, Zitna 25, 11567 Prague 1, Czech 
Republic}
\email{calin.ambrozie@imar.ro}

\date{\today}

\author{Aurelian Gheondea}\thanks{The second named author's research 
supported by a grant of the Romanian 
National Authority for Scientific Research, CNCSIS Ð UEFISCDI, project number
PN-II-ID-PCE-2011-3-0119.}
\address{Department of Mathematics, Bilkent University, 06800 Bilkent, Ankara, 
Turkey, \emph{and} Institutul de Matematic\u a al Academiei Rom\^ane, C.P.\ 
1-764, 014700 Bucure\c sti, Rom\^ania} 
\email{aurelian@fen.bilkent.edu.tr \textrm{and} A.Gheondea@imar.ro} 

\keywords{Completely positive, interpolation, quantum channel, density matrix, 
Choi matrix}

\subjclass[2010]{46L07, 15B48, 15A72, 81P45}

\begin{document}
\maketitle

\begin{abstract}
We present certain existence criteria and parameterisations for an interpolation
problem for
completely positive maps that take given matrices from a 
finite set into prescribed matrices. Our approach uses
density matrices associated to linear functionals on $*$-subspaces of matrices,
inspired by the Smith-Ward linear functional and Arveson's Hahn-Banach type 
Theorem. We perform a careful investigation on the intricate 
relation between the positivity of the density matrix and the positivity of the 
corresponding 
linear functional. A necessary and sufficient condition for the existence 
of solutions and a parametrisation of the set of all solutions of the interpolation 
problem in terms of a closed and convex set of an affine space are obtained. 
Other linear affine restrictions, like trace preserving, can be included as 
well, hence covering applications to quantum channels that 
yield certain quantum states at prescribed quantum states. \end{abstract}

\section{Introduction}

The most general mathematical model of state changes in quantum mechanics,
including the evolution of an open system or the state change due to a 
measurement, is provided by the concept of "quantum operation", that is, a linear, 
completely positive, trace preserving map, cf.\ K.~Kraus \cite{Kra71} and
\cite{Kra83}.
Letting $M_n$ denote the unital $C^*$-algebra 
of all $n\times n$ complex matrices, recall that
a matrix $A\in M_n$ is \emph{positive semidefinite} if all its 
principal determinants are nonnegative. 
A linear map $\phi\colon M_n\ra M_k$ is \emph{completely positive} if, 
for all $m\in\NN$, the 
linear map $I_m\otimes \phi\colon M_m\otimes M_n\ra M_m\otimes M_k$ is 
\emph{positive}, 
in the sense that it maps any positive semidefinite element from $M_m\otimes M_n$ 
into a positive semidefinite element in $M_m\otimes M_k$. By $\mathrm{CP}
(M_n,M_k)$ 
we denote the cone of all completely positive maps $\phi\colon M_n\ra M_k$. An 
equivalent notion, cf.\ W.F.~Stinespring \cite{Stinespring}, is that of \emph{positive 
semidefinite} map $\phi$, 
that is, for all $m\in\NN$, all $h_1,\ldots,h_m\in \CC^n$, and all 
$A_1,\ldots,A_m\in M_n$, we have
\begin{equation}\label{e:psd} \sum_{i,j=1}^m \langle \phi(A_j^*A_i)h_j,h_i\rangle 
\geq 0.
\end{equation}
A \emph{quantum channel} is a completely positive linear 
map that is trace preserving.

A natural question related to these mathematical objects refers to 
finding a quantum channel that can take certain given quantum 
states from a finite list into some other prescribed quantum states.  
Thus, 
P.M. Alberti and A. Uhlmann \cite{AlbertiUhlmann} find a necessary and sufficient
condition for a pair of qubits (quantum states in $M_2$) 
to be mapped under the action of a quantum channel onto another given pair 
of qubits. For larger sets of pure states, the problem has been considered 
from many other perspectives, 
see A. Chefles, R. Jozsa, A. Winter \cite{CheflesJozsaWinter} and 
the bibliography cited there.

In this article we consider the following\medskip

\noindent\textbf{Interpolation Problem.} \emph{
Given matrices $A_\nu \in M_n$ and $B_\nu \in M_k$ for $\nu =1,\ldots ,N$, 
determine 
$\phi \in\mathrm{CP}(M_n ,M_k)$ subject to the conditions}
\begin{equation}\label{e:princ}\varphi(A_\nu) =B_\nu,\mbox{ \it for all }\nu=1,\ldots,N.
\end{equation}

The meaning of "determine" is rather vague so we have to make it clear: firstly, 
one should find necessary and/or sufficient conditions for the existence of such a 
solution $\phi$, secondly, one should find an explicit parametrisation of all solutions 
and, lastly, but not the least, one should find techniques (numerical, computational, 
etc.) to determine (approximate) solutions. Other conditions like trace 
preserving may be required as well, with direct applications to quantum operations.  
In this general formulation, these problems have been considered by 
C.-K.~Li and Y.-T. Poon in \cite{LiPoon}, 
where solutions have been obtained in case when the  
given states (more generally, Hermitian matrices) commute. More general
criteria for existence of solutions have been considered by 
Z.~Huang,  C.-K.~Li, E.~Poon,  and N.-S.~Sze in \cite{HLPZ}, while
T.~Heinosaari, M.A.~Jivulescu, D.~Reeb,  and M.M.~Wolf  obtain in
\cite{HJRW} other criteria of existence of solutions as well as 
techniques to approximate solutions in terms of semidefinite programming, 
in the sense of Y.~Nesterov and A.~Nemirovsky \cite{NesterovNemirovsky} and 
L.~Vanderberghe and S.~Boyd \cite{VanderbergheBoyd}.

The purpose of this article is to approach, from a general perspective, 
existence criteria and parametrisations of solutions of the Interpolation Problem. 
The solvability of the Interpolation Problem is characterised in Theorem~\ref{t:gen} 
from which an explicit parametrisation of the set of all solutions in terms of a closed
and convex set of an affine space follows.

In order to briefly describe our approach and results, let us denote
$\bA=(A_1,\ldots,A_N)$ and call it the \emph{input data} and, similarly, 
$\bB=(B_1,\ldots,B_N)$ and call it the \emph{output data}, as well as
\begin{equation}\label{e:data} 
\cC_{\bA,\bB}:=\{\phi\in\mathrm{CP}(M_n,M_k)\mid \phi(A_\nu)=B_\nu,
\mbox{ for all }\nu=1,\ldots,N\}.
\end{equation}
Clearly, the set $\cC_{\bA,\bB}$ is convex and closed, but it may or may 
not be compact. Since the maps $\phi\in\mathrm{CP}(M_n,M_k)$ are, by 
definition, linear, without loss of generality one can assume that the set 
$\{A_1,\ldots,A_N\}$ is linearly independent, otherwise some linear 
dependence conditions on the output data $\mathbf{B}$ are necessary. On the other 
hand, 
since any $\phi\in\mathrm{CP}(M_n,M_k)$ is Hermitian, in the 
sense that $\phi(A^*)=\phi(A)^*$ for all $A\in M_n$, 
it follows that, without loss of generality, 
one can assume that all matrices $A_1,\ldots,A_N,B_1,\ldots,B_N$ are Hermitian. 
In particular, 
letting $\cS_\bA$ denote the linear span of $A_1,\ldots,A_N$, it follows that 
$\cS_\bA$ is a $*$-subspace of $M_n$, that is, it is a linear subspace 
stable under taking adjoints, 
and then, letting $\phi_{\bA,\bB}\colon \cS_\bA\ra M_k$ be the linear map uniquely 
determined by the conditions
\begin{equation*}\phi_{\bA,\bB}(A_\nu)=B_\nu,\quad \nu=1,\ldots,N,
\end{equation*} it follows that any $\phi\in\mathrm{CP}(M_n,M_k)$ 
satisfying the constraints \eqref{e:princ} should necessarily be an extension of 
$\phi_{\bA,\bB}$. Thus, the Interpolation Problem may
require certain "positivity" properties
of the map $\phi_{\bA,\bB}$ but the $*$-subspace 
$\cS_\bA$ may not be linearly generated by $\cS_\bA^+$, the collection of its 
positive semidefinite matrices or, it may happen that $\cS_\bA$ may contain 
no nontrivial positive semidefinite matrix at all.

If the $*$-subspace $\cS_\bA$ contains the identity matrix $I_n$ (e.g.\
if we are interested in solutions $\phi$ that are unital, that is, $\phi(I_n)=I_k$), 
making 
it an \emph{operator system} \cite{Pau}, then $\cS_\bA$
is linearly generated by the cone of its positive semidefinite matrices. In this case, 
there is the celebrated
Arveson's Hahn-Banach Type Theorem \cite{Arveson}, see the 
equivalence of (a) and (d) in Theorem~\ref{t:sw}, saying that the 
Interpolation Problem has a solution if and only if $\phi_{\bA,\bB}$ has a certain 
"complete positivity" property. The original proof in \cite{Arveson} of this extension 
theorem was simplified by R.R.~Smith 
and J.D.~Ward \cite{SmithWard} who introduced a certain linear functional, 
that we call the \emph{Smith-Ward functional}, 
and then reducing the problem to finding positive extensions of it. 
In the case of the Interpolation Problem, letting 
$s_{\bA,\bB}$ denote the Smith-Ward linear functional associated to 
$\phi_{\bA,\bB}$, see \eqref{e:es}, one can go 
further and associate a "density matrix" $D_{\bA,\bB}$, see \eqref{e:dab}, 
to the Smith-Ward linear functional and then get, see Theorem~\ref{t:opsys}, 
that the solvability of the Interpolation Problem is equivalent with three assertions: 
firstly, with 
the complete positivity of $\phi_{\bA,\bB}$, secondly, with the positivity of the 
Smith-Ward linear functional $s_{\bA,\bB}$ and, finally, with the fact that the 
affine space $D_{\bA,\bB}+M_k\otimes\cS_\bA^\perp$
contains positive semidefinite matrices. 

On the other hand, a careful inspection of Theorem~\ref{t:opsys} shows that, 
actually, the Arveson's Hahn-Banach Theorem and the Smith-Ward 
linear functional $s_{\bA,\bB}$, seem to rather play the role of the hidden 
catalysts of the "reaction" but do not seem to play the major role. 
Indeed, the restrictive assumption that $\cS_\bA$ is 
generated by $\cS_\bA^+$ can be dropped and in Theorem~\ref{t:gen} 
we show that 
the solvability of the Interpolation Problem is equivalent with the fact that the affine 
subspace $D_{\bA,\bB}+M_k\otimes\cS_\bA^\perp$ 
contains positive semidefinite matrices, in the full generality. In addition,
Theorem~\ref{t:gen} yields, as a by-product, a parametrisation of the set of all 
solutions of the 
Interpolation Problem by the closed convex subset $\cP_{\bA,\bB}:=\{P\in (M_k
\otimes \cS_\bA^\perp)^\mathrm{h}\mid  P\geq -D_{\bA,\bB}\}$, through an 
affine isomorphism. In case 
$\cS_\bA$ is a $*$-subspace congruent to an operator system, then the 
parameterising convex subset $\cP_{\bA,\bB}$ is compact as well.

The density matrix $D_{\bA,\bB}$ plays the major role in our approach to 
the Interpolation Problem and it is a simple observation, see Remarks~\ref{r:texta}, 
that the positive semidefiniteness of $D_{\bA,\bB}$ is sufficient for the existence 
of solutions to the Interpolation Problem but, in general, this is not a necessary 
condition. We perform a careful investigation on this issue in Subsection~\ref{ss:dm}
and we provide examples and counter-examples illustrating the complexity of this 
phenomenon. In addition, in Theorem~\ref{t:mut} we show that, in case an 
operator system $\cS$ is generated by matrix units, then the density matrix of 
any positive 
linear functional on $\cS$ is positive semidefinite if and only if $\cS$ is an algebra. 
Therefore, it is the additional assumption that the $*$-subspace is an algebra that 
ensures the fact that the solvability of the Interpolation Problem is equivalent to the 
positive definiteness of $D_{\bA,\bB}$, even though exotic cases of $*$-subspaces 
that are not algebras but when this equivalence
happens may occur as well, see Examples~\ref{ex:pp}. 

In Subsection~\ref{ss:oid} we show that, if the input data $\bA$ is orthonormalised 
with respect to the Hilbert-Schmidt inner product,
then the density matrix is easily calculable as 
$D_{\bA,\bB}=\sum_{\nu=1}^N B_\nu^T\otimes A_\nu$ and this considerably
simplifies the criterion of solvability of the Interpolation Problem, see 
Theorem~\ref{t:ort}. Also, we observe that the Gram-Schmidt 
orthonormalisation does not affect the other assumptions.

Another important observation on the density matrix $D_{\bA,\bB}$ is that, one 
might think that it is the Choi matrix \cite{Choi} that plays the major role 
in getting criteria 
of existence of solutions of the Interpolation Problem, but this seems not to be the 
case: firstly, in order to define the Choi matrix, see Subsection~\ref{ss:ck}, 
we have to use all the matrix units, but the subspace $\cS_{\bA}$ might 
not contain any of them and, secondly, the Choi matrix does not relate well with the 
"action" of the linear map that it represents, while the density matrix does. 
Actually, once we explicitly show the relation between the density matrix and 
the Choi matrix of a given map $\phi\in\mathrm{CP}(M_n,M_k)$, see 
Proposition~\ref{p:trace}, we can define a "partial Choi matrix", 
see \eqref{e:choios}, for linear maps on subspaces.

Finally, in Subsection~\ref{ss:sip} we consider the Interpolation Problem for a 
single interpolation pair, that is, $N=1$, consisting of Hermitian matrices. 
By using techniques from indefinite inner
product spaces, e.g.\ see \cite{GLR05}, we derive criteria of existence of solutions
of the Interpolation Problem with only one operation element, get a 
necessary and sufficient condition of solvability in terms of the definiteness 
characteristics of the data, and estimate the minimal number of the operation 
elements of the solutions.

We thank Eduard Emelyanov for providing useful information 
on ordered vector spaces and especially for providing the bibliographical data 
on Kantorovich's Theorem. We also thank David Reeb for drawing our attention on 
\cite{HJRW}, soon after a first version of this manuscript has been circulated as a 
preprint, which also provided to us more information on literature on more or less 
special cases of the Interpolation Problem, that we were not aware of.

\section{Notation and Preliminary Results}\label{s:npr}

\subsection{The Choi Matrix and the Kraus Form.}\label{ss:ck}
For $n\in\NN$ let $\{e_i^{(n)}\}_{i=1}^n$ be the canonical basis of $\CC^n$.
As usually, the space $M_{n,k}$ of $n\times k$ matrices is identified 
with $\cB(\CC^k,\CC^n)$, the vector space of all linear transformations 
$\CC^k\ra \CC^n$.
For $n,k\in\NN$ we consider the matrix units 
$\{E_{l,i}^{(n,k)}\mid l=1,\ldots,n,\ 
i=1,\ldots,k\}\subset M_{n,k}$ of size $n\times k$, that is, $E_{l,i}^{(n,k)}$ is the $n\times k$
matrix with all entries $0$ except the $(l,i)$-th entry which is $1$.  
In case $n=k$, we denote simply $E^{(n)}_{l,i}=E^{(n,n)}_{l,i}$. Recall that $M_n$
is organized as a $C^*$-algebra in a natural way and hence, positive elements, that 
is, positive semidefinite matrices in $M_n$, are well defined.

Given a linear map $\phi\colon M_n\ra M_k$ define an $kn\times kn$ matrix 
$\Phi_\phi$ by
\begin{equation}\label{e:choi} \Phi_\phi =[\phi(E_{l,m}^{(n)})]_{l,m=1}^n.
\end{equation}
This transformation appears more or less explicitly at J.~de~Pillis \cite{Pillis}, 
A.~Jamio\l kowski \cite{Jamiolkowski}, R.D.~Hill \cite{Hill}, and M.D.~Choi 
\cite{Choi}.
In the following we describe more explicitly the transformation 
$\phi\mapsto \Phi_\phi$.
We use the lexicographic reindexing of 
$\{E_{l,i}^{(n,k)}\mid l=1,\ldots,n,\ 
i=1,\ldots,k\}$, more precisely
\begin{equation}\label{e:Es} 
\bigl(
E^{(n,k)}_{1,1},\ldots,E^{(n,k)}_{1,k},E^{(n,k)}_{2,1},\ldots,E^{(n,k)}_{2,k},
\ldots, E^{(n,k)}_{n,1},\ldots,E^{(n,k)}_{n,k}\bigr) 
=\bigl(\mathcal{E}_1,\mathcal{E}_2,....,\mathcal{E}_{nk} \bigr)
\end{equation}
An even more explicit form of this reindexing is the following
\begin{equation}\label{reindexing} \cE_r=E_{l,i}^{(n,k)}\mbox{ where }
r=(l-1)k+i,\mbox{ for all }l=1,\ldots,n,\ i=1,\ldots,k.
\end{equation}

The formula
\begin{equation}\label{e:prima} \phi_{(l-1)k+i,(m-1)k+j}
=\langle \phi(E_{l,m}^{(n)})e_{j}^{(k)},e_i^{(k)}\rangle,\quad i,j=1,\ldots,k,\
l,m=1,\ldots,n,
\end{equation} and its inverse
\begin{equation}\label{e:adoua} 
\phi(C)=\sum_{r,s=1}^{nk} \phi_{r,s} \cE_r^*C\cE_s,\quad C\in 
M_{n},
\end{equation} establish a linear and bijective correspondence 
\begin{equation}\label{e:defin}
\cB(M_n,M_k)\ni\phi\mapsto \Phi_\phi=[\phi_{r,s}]_{r,s=1}^{nk}\in M_{nk}.
\end{equation} 
The formulae \eqref{e:prima} and its inverse \eqref{e:adoua}
establish an affine and order preserving isomorphism
\begin{equation}\label{e:cpmp}
\mathrm{CP}(M_n,M_k)\ni\phi\mapsto\Phi_\phi\in M_{nk}^+.
\end{equation}
Given $\phi\in\cB(M_n,M_k)$ the matrix $\Phi_\phi$ as in \eqref{e:choi} is called
\emph{the Choi matrix} of $\phi$.

Let $\phi:M_n\rightarrow M_k$ be a completely 
positive map. Then, cf.\ K.~Kraus \cite{Kra71} and M.D.~Choi \cite{Choi}, 
there are $n\times k$ matrices 
$V_1,V_2,\ldots,V_m$ with $m\leq nk$ such that
\begin{equation}\label{e:kraus}
\phi(A)=V_1^*AV_1+V_2^*AV_2+\cdots +V_m^*AV_m \mbox{ for all }
A\in M_n. 
\end{equation}
The representation \eqref{e:kraus} is called the \emph{Kraus representation} of 
$\phi$ and the matrices $V_1,\dots,V_m$ are called the \emph{operation elements}. 
Note that the representation \eqref{e:kraus} of a given completely positive 
map $\phi$ is highly non-unique, not only with respect to its operation 
elements but also with respect to $m$, the number of these elements.
Concerning the minimal number of the operation elements in the Kraus 
form representation of completely positive maps on full matrix algebras, 
the following statement, which is implicit in the original article of M.D.~Choi 
\cite{Choi}, holds.

\begin{proposition}\label{p:minimal} Let $\phi\in\mathrm{CP}(M_n,M_k)$ 
be a completely 
positive map and let $\Phi$ be its Choi matrix. Then, $\rank(\Phi)$ is the 
minimal number of the operation elements of $\phi$.
\end{proposition}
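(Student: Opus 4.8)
The plan is to set up a correspondence, preserving the number of terms, between Kraus representations of $\phi$ and decompositions of the Choi matrix $\Phi$ into sums of rank-one positive semidefinite matrices, and then to invoke the elementary fact that, for a positive semidefinite matrix, the least number of rank-one terms in such a decomposition equals its rank.

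First I would expand $\Phi$ starting from an arbitrary Kraus representation $\phi(A)=\sum_{t=1}^m V_t^*AV_t$, $V_t\in M_{n,k}$. Using $E_{l,m}^{(n)}=e_l^{(n)}(e_m^{(n)})^*$, the $(l,m)$-block of $\Phi$ is $\phi(E_{l,m}^{(n)})=\sum_{t=1}^m (V_t^*e_l^{(n)})(V_t^*e_m^{(n)})^*$. Introducing $v_t^{(l)}:=V_t^*e_l^{(n)}\in\CC^k$ and the stacked vector $w_t\in\CC^{nk}$ whose block indexed by $l$, in the lexicographic order \eqref{reindexing}, is $v_t^{(l)}$, an entrywise comparison with \eqref{e:prima} shows that
\[
\Phi=\sum_{t=1}^m w_t w_t^*.
\]
Conversely, every $w\in\CC^{nk}$ determines, under the same indexing, $n$ blocks $v^{(1)},\dots,v^{(n)}\in\CC^k$, which I take to be the columns of $V^*$ for a unique $V\in M_{n,k}$; applying this to each $w_t$ in a given rank-one decomposition of $\Phi$ produces operation elements $V_1,\dots,V_m$ whose Kraus sum \eqref{e:kraus} returns $\phi$. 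Thus the assignment $(V_1,\dots,V_m)\mapsto(w_1,\dots,w_m)$ is a bijection preserving the number of terms.

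The second, purely linear-algebraic, ingredient is that for positive semidefinite $\Phi$ the minimal $m$ with $\Phi=\sum_{t=1}^m w_tw_t^*$ equals $\rank(\Phi)$. For the lower bound, the identity $\ker(A+B)=\ker A\cap\ker B$, valid for positive semidefinite $A,B$, gives $\ran(A+B)=\ran A+\ran B$, whence $\ran(\Phi)=\lin\{w_1,\dots,w_m\}$ and so $\rank(\Phi)\le m$ for any such decomposition. For the upper bound, the spectral decomposition $\Phi=\sum_{t=1}^{r}\lambda_t u_t u_t^*$ with $r=\rank(\Phi)$ and $\lambda_t>0$ yields $\Phi=\sum_{t=1}^r(\sqrt{\lambda_t}\,u_t)(\sqrt{\lambda_t}\,u_t)^*$, a decomposition with exactly $r$ terms.

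Combining the two steps proves the statement: a Kraus representation of $\phi$ with the minimal number of operation elements corresponds to a rank-one decomposition of $\Phi$ with the minimal number of terms, which is $\rank(\Phi)$. I expect the only delicate point to be the bookkeeping in the first step, namely checking that the stacking $V_t\mapsto w_t$ is correctly compatible with the reindexing \eqref{reindexing} and that the inverse assignment recovers operation elements in the form \eqref{e:kraus}; the linear-algebra step is standard and presents no real obstacle.
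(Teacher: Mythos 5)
Your proposal is correct and follows essentially the same route as the paper: both reduce the statement to the observation that a single Kraus term $V^*(\cdot)V$ corresponds exactly to a rank-one positive semidefinite summand of the Choi matrix (your explicit stacking $V_t\mapsto w_t$ is just a vectorised form of the paper's entrywise computation \eqref{e:rim}), and then to the standard fact that the minimal number of rank-one positive semidefinite summands of a positive semidefinite matrix is its rank. The only cosmetic difference is that for the lower bound you argue via $\ran(\Phi)=\lin\{w_1,\dots,w_m\}$ where the paper simply invokes subadditivity of rank; both are fine.
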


\begin{proof} Let $V=[v_{l,i}]$, with 
$l=1,\ldots,n$ and $i=1,\ldots,k$, be a nontrivial $n\times k$ matrix such that 
$\phi(A)=V^*AV$ for all $A\in M_{n}$. Then, for all $l,m=1,\ldots,n$ and all 
$i,j=1,\dots,k$, by \eqref{e:prima} we have
\begin{equation} \phi_{(l-1)k+i,(m-1)k+j} 
= \langle \phi(E_{l,m}^{(n)})e_j^{(k)},e_i^{(k)}\rangle
 = \langle E_{l,m}^{(n)} Ve_j^{(k)},Ve_i^{(k)}\rangle
 = v_{m,j} \overline{v_{l,i}}.\label{e:rim}
\end{equation} 
We take into account that the reindexing $r=r(l,i)=(l-1)k+i$ is a one-to-one 
correspondence between $\{r\mid 1\leq r\leq nk\}$ and 
$\{(l,i)\mid 1\leq l\leq n,\ 1\leq i\leq k\}$, hence \eqref{e:rim} is equivalent with
\begin{equation}\label{e:firs} \phi_{r,s}=f_s \overline{f_r},\quad r,s=1,\ldots,nk,
\end{equation} where $f_r=v_{l,i}$ whenever $r=r(l,i)=(l-1)k+i$. This proves that 
the Choi matrix $\Phi$ has rank $1$.

From the particular case proved before, 
since the correspondence between Choi matrices and completely 
positive maps is bijective and affine, it follows that, if $\phi$ has the representation 
\eqref{e:kraus}, $\rank(\Phi)\leq m$. On the other hand, if $m=\rank(\Phi)$, then 
$\Phi=\Phi_1+\Phi_2+\cdots +\Phi_m$ with positive matrices $\Phi_j$, 
$\rank(\Phi_j)=1$ for all $j=1,\ldots,m$. 
Then, from the particular case proved before, for each $j=1,\ldots,m$, it follows that 
$\Phi_j(A)=V_j^*AV_j$ for 
some $V_j\in M_{k,n}$ and all $A\in M_n$, hence $\Phi$ has the Kraus 
representation \eqref{e:kraus}.
\end{proof}

\subsection{$*$-Subspaces.}\label{ss:stars}
 For a fixed natural number $m$, $\cS\subseteq M_m$ 
is called a \emph{$*$-subspace} if it is a linear subspace of $M_m$ that is stable 
under taking adjoints. 
Note that, both the real part and imaginary part of matrices in $\cS$ are 
in $\cS$ and hence $\cS$ is linearly generated by the real 
subspace $\cS^\mathrm{h}$ 
of all its Hermitian matrices. Also, 
$\cS^+=\{A\in \cS\mid A\geq 0\}$ is a cone in $\cS^\mathrm{h}$ but, in general, 
$\cS^+$ may fail to linearly generate $\cS^\mathrm{h}$.
Recall \cite{Pau} 
that a $*$-subspace $\cS$ in $M_m$ is called an \emph{operator system} 
if the identity matrix $I_m\in \cS$.  Any operator system $\cS$ is linearly
generated by $\cS^+$, e.g.\ observing that any Hermitian matrix $B\in\cS$ can be 
written
\begin{equation*} B=\frac{1}{2}(\|B\| I_m+B)-\frac{1}{2}(\|B\|I_m-B),
\end{equation*} hence a difference of two positive semidefinite matrices in $\cS$.
The next proposition provides different characterisations of those $*$-subspaces 
$\cS$ of matrices that are linearly generated by $\cS^+$, as well as a model that 
points out the distinguished role  of operator systems. We need first to recall a 
technical lemma.

\begin{lemma}\label{l:range} Given two matrices $A,B\in M_m^+$, we have 
$B\leq \alpha A$, for some some $\alpha>0$, if and only if 
$\ran(B)\subseteq \ran(A)$.
\end{lemma}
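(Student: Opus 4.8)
The plan is to reduce both implications to the observation that, for a positive semidefinite matrix, the range is the orthogonal complement of the kernel; hence $\ran(B)\subseteq\ran(A)$ is equivalent to $\ker(A)\subseteq\ker(B)$, and I will work with kernels throughout.

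For the forward implication, assume $B\le\alpha A$ for some $\alpha>0$ and take any $x\in\ker(A)$. Then $\langle Ax,x\rangle=0$, so the positivity of $B$ together with the inequality gives $0\le\langle Bx,x\rangle\le\alpha\langle Ax,x\rangle=0$, whence $\langle Bx,x\rangle=0$. Factoring $B=B^{1/2}B^{1/2}$ turns this into $\|B^{1/2}x\|^2=0$, so $B^{1/2}x=0$ and therefore $Bx=0$; thus $\ker(A)\subseteq\ker(B)$, which is the desired range inclusion. This direction is entirely routine.

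The substance is in the converse, and the plan is to exploit that $A$ becomes invertible once we restrict to its range. Decompose $\CC^m=\ran(A)\oplus\ker(A)$. Because $A$ is self-adjoint it leaves this decomposition invariant, and the hypothesis $\ran(B)\subseteq\ran(A)$ together with $\ker(A)\subseteq\ker(B)$ shows that $B$ does as well; with respect to the decomposition both matrices are block diagonal, $A=A_0\oplus 0$ and $B=B_0\oplus 0$, where $A_0$ is positive \emph{definite} on $\ran(A)$ and $B_0\ge 0$. On $\ran(A)$ I may therefore form the inverse square root $A_0^{-1/2}$ and consider the positive semidefinite matrix $A_0^{-1/2}B_0A_0^{-1/2}$; setting $\alpha$ equal to its operator norm (or any larger number) yields $A_0^{-1/2}B_0A_0^{-1/2}\le\alpha I$ on $\ran(A)$. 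Conjugating this inequality by $A_0^{1/2}$ gives $B_0\le\alpha A_0$, and reassembling the blocks produces $B\le\alpha A$ on all of $\CC^m$.

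The only point demanding care — and the main obstacle — is verifying that $B$ genuinely respects the decomposition $\ran(A)\oplus\ker(A)$, so that the clean block-diagonal picture is legitimate: this is exactly where both $\ran(B)\subseteq\ran(A)$ (to force the image into $\ran(A)$) and $\ker(A)\subseteq\ker(B)$ (to annihilate the $\ker(A)$ summand) are used. Once the block structure is established, the invertibility of $A_0$ on $\ran(A)$ makes the construction of $\alpha$ immediate, and no further estimates are needed.
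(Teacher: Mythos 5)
Your proof is correct, but it takes a genuinely different route from the paper's. The paper disposes of the lemma in three lines by invoking Douglas's majorization--factorization--range-inclusion theorem \cite{Douglas} (applied to $A^{1/2}$ and $B^{1/2}$, together with $\ran(P)=\ran(P^{1/2})$ for $P\geq 0$), so its argument is essentially a reduction to a cited black box. You instead give a self-contained elementary proof: the forward direction via $\langle Bx,x\rangle=\|B^{1/2}x\|^2$ on $\ker(A)$, and the converse via the block decomposition $\CC^m=\ran(A)\oplus\ker(A)$, in which $A=A_0\oplus 0$ with $A_0$ positive definite, so that $\alpha=\|A_0^{-1/2}B_0A_0^{-1/2}\|$ works. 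Both directions check out (including your care in verifying that $B$ respects the decomposition, and the remark that $\alpha$ may be enlarged to be strictly positive when $B=0$). What your approach buys is transparency about where finite-dimensionality enters: the invertibility of $A_0$ on $\ran(A)$ is automatic for matrices but fails for operators on infinite-dimensional spaces, which is precisely why the general Douglas lemma must be phrased via factorization rather than bare range inclusion; the paper's approach buys brevity and a pointer to the standard reference. In effect you have reproved the finite-dimensional case of Douglas's result rather than quoting it.
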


\begin{proof} A folklore result in operator theory, e.g.\ see 
\cite{Douglas}, says that for two matrices $A,B\in M_m$, the inequality 
$BB^*\leq \alpha AA^*$, for some $\alpha>0$, 
is equivalent with $\ran(B)\subseteq \ran(A)$. Consequently, 
if $A,B\in M_m^+$ then $B\leq A$ if and only if 
$\ran(B^{1/2})\subseteq \ran(A^{1/2})$. From here the statement follows since
we have $\ran(P)=\ran(P^{1/2})$ for any positive semidefinite matrix $P$.
\end{proof}

\begin{proposition}\label{p:star} Let $\cS$ be a $*$-space in $M_m$. The following 
assertions are equivalent:
\begin{itemize}
\item[(i)] $\cS$ is linearly generated by $\cS^+$.
\item[(ii)] There exists $A\in\cS^+$ such that for any $B\in\cS^\mathrm{h}$ we have 
$B\leq \alpha A$ for some $\alpha>0$.
\item[(iii)] For any $B\in \cS^\mathrm{h}$ there exists $A\in\cS^+$ with $B\leq A$.
\item[(iv)] There exists $T\in M_m$ a matrix of rank $r$, with 
$\ran(T)=\CC^r\oplus 0\subseteq M_m$, and 
an operator system $\cT\subseteq M_r$ such that 
\begin{equation}\label{e:tes} \cS =T^*(\cT\oplus 0_{m-r})T,
\end{equation} where $0_{m-r}$ denotes the $(m-r)\times (m-r)$ null matrix.
\end{itemize}
\end{proposition}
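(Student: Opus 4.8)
The plan is to prove the equivalences in two stages: first close the cycle (i)~$\Ra$~(ii)~$\Ra$~(iii)~$\Ra$~(i) among the three order-theoretic conditions, using only elementary manipulations together with Lemma~\ref{l:range}, and then attach the structural model (iv) to the rest by establishing (iv)~$\Ra$~(i) and (ii)~$\Ra$~(iv).

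For the first stage, (ii)~$\Ra$~(iii) is immediate, since the single matrix $A$ furnished by (ii) serves for every $B$. For (iii)~$\Ra$~(i), given $B\in\cS^\mathrm{h}$ I would choose $A\in\cS^+$ with $B\leq A$; then $A-B\in\cS^\mathrm{h}$ is positive semidefinite, so $A-B\in\cS^+$ and $B=A-(A-B)$ writes $B$ as a difference of two elements of $\cS^+$, whence $\cS^\mathrm{h}$, and thus $\cS=\cS^\mathrm{h}+\iac\cS^\mathrm{h}$, is linearly generated by $\cS^+$. For (i)~$\Ra$~(ii), I would use finite dimensionality to pick $A_1,\ldots,A_p\in\cS^+$ whose real linear span is $\cS^\mathrm{h}$ and set $A:=A_1+\cdots+A_p\in\cS^+$; since $0\leq A_j\leq A$ for each $j$, any $B=\sum_j c_jA_j$ with real coefficients obeys $B\leq\sum_{c_j>0}c_jA_j\leq\alpha A$ with $\alpha=\max_j|c_j|$, which is exactly (ii).

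The implication (iv)~$\Ra$~(i) is soft: the congruence $X\mapsto T^*XT$ is a positive linear map carrying the positive cone of $\cT\oplus 0_{m-r}$ into $\cS^+$, and since every operator system is linearly generated by its positive part (as recalled before the proposition), applying this congruence to $\cT\oplus 0_{m-r}$ shows at once that $\cS=T^*(\cT\oplus 0_{m-r})T$ is linearly generated by $\cS^+$.

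The substantial step is (ii)~$\Ra$~(iv), and this is where I expect the main obstacle. Starting from the order unit $A\in\cS^+$ of (ii), set $\cM:=\ran(A)$ and $r:=\rank(A)$. The first key point is that every $B\in\cS$ is supported on $\cM$, that is $\ran(B)\subseteq\cM$ and $\ker(A)\subseteq\ker(B)$. For Hermitian $B$ one applies (ii) to both $B$ and $-B$ to get $\alpha A\pm B\geq 0$; for $v\in\ker(A)$ the nonnegativity of $\langle(\alpha A-B)v,v\rangle=-\langle Bv,v\rangle$ and of $\langle(\alpha A+B)v,v\rangle=\langle Bv,v\rangle$ forces $\langle(\alpha A-B)v,v\rangle=0$, and a positive semidefinite matrix annihilates its isotropic vectors, so $(\alpha A-B)v=0$ and hence $Bv=\alpha Av=0$; passing to Hermitian parts extends this to all of $\cS$. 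Note that Lemma~\ref{l:range} does not apply directly here, since the generic $B\in\cS^\mathrm{h}$ is merely Hermitian rather than positive, so the range inclusion must be extracted by the isotropic-vector argument above (or, alternatively, by applying Lemma~\ref{l:range} to the positive matrix $\alpha A-B$ after bounding it above by a multiple of $A$). Once the support property is in hand, I would choose a unitary $U\in M_m$ with $U\cM=\CC^r\oplus 0$, so that conjugation by $U$ sends $\cS$ to a $*$-subspace of the form $\cR\oplus 0_{m-r}$ with $\cR\subseteq M_r$ and sends $A$ to $A_0\oplus 0_{m-r}$ with $A_0\in M_r$ strictly positive; normalising by the square root, $\cT:=A_0^{-1/2}\cR A_0^{-1/2}$ contains $I_r=A_0^{-1/2}A_0A_0^{-1/2}$ and is therefore an operator system, and unwinding the identities yields $\cS=T^*(\cT\oplus 0_{m-r})T$ with $T:=(A_0^{1/2}\oplus 0)U$, a matrix of rank $r$ with $\ran(T)=\CC^r\oplus 0$ as required. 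The residual effort is bookkeeping to confirm these rank and range data.
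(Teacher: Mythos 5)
Your proof is correct, and while it follows the same overall architecture as the paper's (the cycle (i)$\Ra$(ii)$\Ra$(iii)$\Ra$(i) together with (ii)$\Ra$(iv) and (iv)$\Ra$(i), ending with the same congruence-by-$A^{1/2}$ construction), two of the individual implications are argued by genuinely different means. For (i)$\Ra$(ii) the paper takes $A\in\cS^+$ of maximal rank and uses Lemma~\ref{l:range} together with a rank-maximality contradiction to show that this single $A$ dominates all of $\cS^+$; you instead sum a finite family $A_1,\ldots,A_p\in\cS^+$ spanning $\cS^\mathrm{h}$ and bound coefficients, which is more elementary and avoids the range lemma entirely for this step (the only loose end is the trivial case $B=0$, where $\max_j|c_j|=0$ and one simply takes $\alpha=1$). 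Your (iii)$\Ra$(i) is also leaner: you write $B=A-(A-B)$ from a single majorant, whereas the paper invokes majorants of both $\pm B$. In (ii)$\Ra$(iv) the paper obtains the support property $\ran(B)\subseteq\ran(A)$ by applying Lemma~\ref{l:range} on $\cS^+$ and then invoking the already-established (i) to pass to all of $\cS$; your isotropic-vector argument (for $v\in\ker(A)$, nonnegativity of $\langle(\alpha A\pm B)v,v\rangle$ forces $\langle Bv,v\rangle=0$, hence $(\alpha A-B)v=0$ and $Bv=0$) derives the same fact directly from (ii) without the detour through (i), and you are right to flag that Lemma~\ref{l:range} does not apply to a merely Hermitian $B$. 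What the paper's route buys is the reusable fact, exploited again in the proof of Lemma~\ref{l:kant}, that a single maximal-rank element of $\cS^+$ serves as an order unit for $\cS^\mathrm{h}$; what yours buys is a shorter, more self-contained argument. The final assembly of $T$ and $\cT$ coincides with the paper's up to notation.
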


\begin{proof} (i)$\Ra$(ii). Assuming that $\cS$ is linearly generated by $\cS^+$, 
let $A$ be a matrix in $\cS^+$ of maximal rank. We first show that, for any 
$B\in\cS^+$ we have $B\leq \alpha A$ for some $\alpha>0$. To this end, assume 
that this is not true hence, by Lemma~\ref{l:range}, $\ran(B)\not\subseteq \ran(A)$ 
hence, $\ran(A)$ is a proper subspace of $\ran(A)+\ran(B)$. 
Since $A,B\leq A+B$, again by Lemma~\ref{l:range} it follows
$\ran(B)+\ran(A)\subseteq \ran(A+B)$. But then, $A+B\in\cS^+$ has bigger rank 
than $A$, which contradicts the choice of $A$.

Let now $B\in\cS^\mathrm{h}$ be arbitrary. By assumption, $B=B_1-B_2$ with 
$B_j\in \cS^+$ for $j=1,2$ hence, by what has been proven before, there exist 
$\alpha>0$ such that $B_1\leq \alpha A$, hence $B\leq B_1\leq \alpha A$.

(ii)$\Ra$(iii). This implication is obvious. 

(iii)$\Ra$(i). Since $\cS$ is a $*$-subspace, in order to prove that $\cS$ is linearly 
generated by $\cS^+$, it is sufficient to prove that $\cS^\mathrm{h}$ is (real) linearly 
generated by $\cS^+$. To see this, let $B\in\cS^\mathrm{h}$ be arbitrary. By 
assumption, there exist $A_j\in\cS^+$, $j=1,2$, such that $B\leq A_1$ and 
$-B\leq A_2$ hence, letting $A=A_1+A_2\in\cS^+$, we have
\begin{equation*}B=\frac{1}{2}(A-B)-\frac{1}{2}(A+B),
\end{equation*} where $A-B,A+B\in\cS^+$.

(ii)$\Ra$(iv). Let $A\in\cS^+$ be a matrix 
having the property that for any $B\in\cS^\mathrm{h}$ there exists 
$\alpha>0$ such that $B\leq \alpha A$. 
By Lemma~\ref{l:range}, it follows that for any 
$B\in\cS^+$ we have $\ran(B)\subseteq \ran(A)$ hence, since $\cS$ is linearly 
generated by $\cS^+$, it follows that for any $B\in\cS$ we have 
$\ran(B)\subseteq \ran(A)$, in particular, $\ran(A)$ reduces $B$ and
\begin{equation*}B=\left[\begin{matrix} B_0 & 0 \\ 0 & 0 \end{matrix}\right],\quad 
\mbox{ w. r. t. } \CC^m=\ran(A)\oplus \ker(A).
\end{equation*} Letting $r$ denote the rank of $A$, 
observe now that $A_0$ is positive semidefinite and 
invertible as a linear transformation in $\ran(A)$, hence 
\begin{equation*}\cT_0=\{A_0^{-1/2}B_0A_0^{-1/2}\mid B\in\cS\}\end{equation*}
is an operator system in $\cB(\ran(A))$. Then consider a unitary transformation $V$ 
in $M_m$ such that it maps $\ran(A)$ to $\CC^r$ and $\ker(A)$ to $\CC^{m-r}$, 
letting
\begin{equation*} \cT=V\cT_0V^*\mbox{ and } T=VA^{1/2}
\end{equation*}
the conclusion follows.

(iv)$\Ra$(i). This implication is clear.
\end{proof}

\begin{corollary}\label{c:pd} If the $*$-subspace $\cS$ of $M_m$ 
contains a positive definite matrix, then $\cS$ is linearly generated by $\cS^+$.
\end{corollary}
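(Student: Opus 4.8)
The plan is to deduce this directly from Proposition~\ref{p:star} by verifying condition (ii): I would take the witness matrix $A$ to be precisely the positive definite matrix that $\cS$ is assumed to contain, and then check that every Hermitian element of $\cS$ is dominated by a positive scalar multiple of $A$. Since (ii)$\Ra$(i) has already been established in the Proposition, this suffices.

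First I would record the two elementary operator inequalities needed. On the one hand, since $A\in\cS^+$ is positive \emph{definite}, it is invertible and bounded below by its smallest eigenvalue $\lambda>0$, so $\lambda I_m\leq A$, equivalently $I_m\leq\lambda^{-1}A$. On the other hand, any Hermitian matrix $B$ satisfies $B\leq\|B\|I_m$, where $\|B\|$ is the operator norm. Chaining these gives, for an arbitrary $B\in\cS^{\mathrm h}$,
\begin{equation*}
B\leq\|B\|I_m\leq\frac{\|B\|}{\lambda}\,A,
\end{equation*}
so that $B\leq\alpha A$ with $\alpha=\|B\|/\lambda>0$. Note that condition (ii) permits the scalar $\alpha$ to depend on $B$, which is exactly what this argument produces.

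This shows that the fixed matrix $A\in\cS^+$ has the property required in (ii), whence Proposition~\ref{p:star} yields that $\cS$ is linearly generated by $\cS^+$, completing the proof. There is essentially no obstacle here: the only point worth noticing is that positive \emph{definiteness} (rather than mere positive semidefiniteness) is what forces the strict lower bound $\lambda I_m\leq A$, and hence lets a single element $A$ serve as the universal dominating matrix for all of $\cS^{\mathrm h}$. One could alternatively invoke Lemma~\ref{l:range} via $\ran(A)=\CC^m$, but since that lemma is stated for positive semidefinite $B$ one would still have to split a general Hermitian $B$ into its positive and negative parts, so the direct eigenvalue bound above is the cleaner route.
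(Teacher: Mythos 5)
Your argument is correct, and it takes a slightly different route from the paper's. You verify condition (ii) of Proposition~\ref{p:star} directly: taking $A$ to be the given positive definite matrix, the eigenvalue bound $\lambda I_m\leq A$ combined with $B\leq\|B\|I_m$ gives $B\leq(\|B\|/\lambda)A$ for every $B\in\cS^{\mathrm h}$, and then (ii)$\Ra$(i) finishes. (The only cosmetic point is that for $B=0$ your $\alpha=\|B\|/\lambda$ is $0$ rather than strictly positive; take $\alpha=(\|B\|+1)/\lambda$ to be safe.) The paper instead conjugates: if $P\in\cS$ is positive definite, then $\cT=P^{-1/2}\cS P^{-1/2}$ is an operator system, hence linearly generated by its positive cone, and the congruence $\cS=P^{1/2}\cT P^{1/2}$ preserves positivity and transports that property back to $\cS$ --- in effect an instance of condition (iv) of the same proposition. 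The two proofs have comparable length; yours is the more elementary, needing only the operator norm and smallest-eigenvalue bounds, while the paper's emphasizes the structural point (already used in the proof of Proposition~\ref{p:star} itself) that a $*$-subspace containing a positive definite matrix is congruent to an operator system, a picture that is reused later, e.g.\ in Remark~\ref{r:compact}.
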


\begin{proof} Indeed, if $P\in\cS$ is positive definite, then 
$\cT=P^{-1/2}\cS P^{-1/2}$ is an operator system and then $\cS=P^{1/2}\cT P^{1/2}$
is linearly generated by $\cS^+$.
\end{proof}

In the following we will use a particular case of the celebrated theorem 
of L.~Kantorovich \cite{Kantorovich}, see also Theorem~I.30 in 
\cite{AliprantisTourky}, of Hahn-Banach type.

\begin{lemma}\label{l:kant} Let $\cS$ be an $*$-subspace of
$M_m$ that is linearly generated by $\cS^+$, 
and let $f\colon \cS\ra\CC$ be a positive linear map, 
in the sense that it maps any element $A\in\cS^+$ to a nonnegative number 
$f(A)$. Then, there exists a positive linear functional $\widetilde f\colon M_m\ra \CC$ 
that extends $f$. 
\end{lemma}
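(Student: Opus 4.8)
The plan is to reduce this to the classical Kantorovich extension theorem by exhibiting $\cS$ as an ordered vector space with an order unit, or more precisely by replacing the lack of an order unit with a dominating element supplied by Proposition~\ref{p:star}. Since $\cS$ is linearly generated by $\cS^+$, assertion (ii) of Proposition~\ref{p:star} gives us a distinguished $A\in\cS^+$ such that every $B\in\cS^\mathrm{h}$ satisfies $B\leq\alpha A$ for some $\alpha>0$. This element $A$ will play the role of an order unit for the real ordered space $\cS^\mathrm{h}$: it is an \emph{order unit} in the sense that for each $B$ there is $\alpha$ with $-\alpha A\leq B\leq\alpha A$ (apply (ii) to both $B$ and $-B$). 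Thus $(\cS^\mathrm{h},\cS^+,A)$ is an Archimedean-type ordered real vector space with order unit, and $f$ restricted to $\cS^\mathrm{h}$ is a positive $\RR$-linear functional on it.

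First I would record that it suffices to extend the real-linear restriction $f|_{\cS^\mathrm{h}}$: once we have a positive real-linear $g\colon M_m^\mathrm{h}\ra\RR$ extending it, we recover a positive $\CC$-linear functional on all of $M_m$ by setting $\widetilde f(X)=g(\Re X)+\iac\, g(\Im X)$, using that every $X\in M_m$ decomposes as $X=\Re X+\iac\,\Im X$ with $\Re X,\Im X\in M_m^\mathrm{h}$, and that this $\widetilde f$ agrees with $f$ on $\cS$ because $f$ itself is compatible with this Hermitian decomposition on $\cS$ (recall $\cS$ is a $*$-subspace, so $\Re X,\Im X\in\cS$ for $X\in\cS$). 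Positivity of $\widetilde f$ is immediate since a positive matrix is Hermitian and lands in $M_m^\mathrm{h}{}^+$.

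For the real extension step I would invoke Kantorovich's theorem directly in the following form: if $W\supseteq V$ is an inclusion of ordered real vector spaces such that $V$ is \emph{cofinal} in $W$ (every element of $W$ is dominated by some element of $V$), then every positive linear functional on $V$ extends to a positive linear functional on $W$. Here $W=M_m^\mathrm{h}$ with its usual cone $M_m^+$ and $V=\cS^\mathrm{h}$. The cofinality hypothesis is exactly what the order unit $A$ buys us: any $C\in M_m^\mathrm{h}$ satisfies $C\leq\|C\|\,I_m$, and since $I_m$ need not lie in $\cS$ we instead dominate through $A$—but this requires $A$ to dominate $I_m$ inside $M_m^\mathrm{h}$, which is false in general when $\ran(A)\neq\CC^m$.

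The main obstacle is precisely this cofinality gap: $\cS$ may be supported only on $\ran(A)=:\cR$, a proper subspace, so $\cS^\mathrm{h}$ cannot dominate matrices acting on $\cR^\perp$. I would resolve it using the structural model in Proposition~\ref{p:star}(iv), or more simply by compressing to $\cR$. Writing the compression $X\mapsto P_\cR X P_\cR$, the space $\cS$ sits inside $\cB(\cR)^\mathrm{h}$ where $A$ becomes invertible (as in the proof of (ii)$\Ra$(iv)), hence genuinely an order unit, so Kantorovich applies to extend $f$ to a positive functional $g_0$ on $\cB(\cR)^\mathrm{h}\cong M_r^\mathrm{h}$. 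I then pull $g_0$ back to a positive functional on all of $M_m^\mathrm{h}$ by precomposing with the (positive, unital-up-to-compression) map $C\mapsto P_\cR C P_\cR$; positivity is preserved because compression sends $M_m^+$ into $\cB(\cR)^+$, and the extension agrees with $f$ on $\cS$ since every element of $\cS$ is already supported on $\cR$. Composing with the complexification of the first paragraph yields the desired $\widetilde f$ on $M_m$.
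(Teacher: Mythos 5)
Your proposal is correct and follows essentially the same route as the paper: both use Proposition~\ref{p:star} to obtain the dominating element $A\in\cS^+$, restrict to the Hermitian part, extend on $\cB(\ran(A))^\mathrm{h}$ where $A$ acts as a genuine order unit, pull back to $M_m$ by compression onto $\ran(A)$, and complexify. The only difference is that you invoke the cofinal form of Kantorovich's theorem as a black box for the real extension step, whereas the paper carries out that step explicitly via the sublinear functional $p(C)=\inf\{f_{\mathrm{h}}(B)\mid C\leq B\in\cS^{\mathrm{h}}\}$ and the classical Hahn--Banach theorem.
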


\begin{proof} Briefly, the idea is to consider the $\RR$-linear functional
$f_\mathrm{h}=f|\cS^\mathrm{h}$ 
and note that $f_\mathrm{h}$ is positive. By Proposition~\ref{p:star}, there exists 
$A\in \cS^+$ such that for all $B\in\cS^\mathrm{h}$ there exists $\alpha>0$ with
$B\leq \alpha A$. By Lemma~\ref{l:range}, we have $\ran(B)\subseteq \ran(A)$ 
for all $B\in\cS^\mathrm{h}$.
Let $p\colon \cB(\ran(A))^\mathrm{h}\ra \RR$ 
be defined by
\begin{equation}\label{e:pe} p(C)=\inf\{ f_\mathrm{h}(B)\mid 
C\leq B\in \cS^\mathrm{h}
\},\quad C\in \cB(\ran(A))^\mathrm{h}.\end{equation}
Then $p$ is a sublinear functional on the $\RR$-linear space 
$\cB(\ran(A))^\mathrm{h}$ and $f(B)=p(B)$ for all $B\in\cS^\mathrm{h}$. 
By the Hahn-Banach Theorem, there exists a linear 
functional $g\colon \cB(\ran(A))^\mathrm{h}\ra \RR$ 
that extends $f_\mathrm{h}$ and such that $g(B)\leq p(B)$ for all 
$B\in \cB(\ran(A))^\mathrm{h}$. Then, for any $B\in M_m^+$, since $-B\leq 0$ it follows
$-g(B)=g(-B)\leq p(-B)\leq f_\mathrm{h}(0)=0$, hence $g(B)\geq 0$. Then, let 
$\widetilde f$ be the canonical extension of $g$ to $\cB(\ran(A))=
\cB(\ran(A))^\mathrm{h}+\iac \cB(\ran(A))^\mathrm{h}$, in the usual way, and finally
extend $\widetilde f$ to $M_m$ by letting $\widetilde f(B)=
\widetilde f(P_{\ran(A)}B|\ran(A))$ for all $B\in M_m$, where $P_{\ran(A)}$ denotes the orthogonal projection of $\CC^m$ onto $\ran(A)$.
\end{proof}

\subsection{The Smith-Ward Functional.}\label{ss:sw}
In the following we recall a technical concept introduced by R.R.~Smith 
and J.D.~Ward, 
cf.\ the proof of Theorem 2.1 in \cite{SmithWard}, and there used to provide
another proof to the Arveson's Hahn-Banach Theorem \cite{Arveson}
for completely positive 
maps, see also Chapter 6 in \cite{Pau}. Consider $\cS$ a 
subspace of $M_n$.
Note that, 
for any $k\in\NN$, $M_k(\cS)$, the collection of all $k\times k$ block-matrices with 
entries in $\cS$, canonically identified with $M_k\otimes \cS$, is
embedded into the $C^*$-algebra $M_k(M_n)=M_k\otimes M_n$ 
and hence it inherits a natural order relation, in particular, positivity of its elements is
well-defined. If $\cS$ is a $*$-subspace then $M_k(\cS)$ is a $*$-subspace as well 
and if, in addition, the $*$-subspace $\cS$ is linearly generated by the cone of its 
positive semidefinite matrices, the same is true for $M_k(\cS)$, e.g.\ by 
Proposition~\ref{p:star}. Thus, a linear map $\phi\colon \cS\ra M_k$, is called 
\emph{positive} if it maps any positive semidefinite matrix from $\cS$ 
to a positive semidefinite matrix in $M_k$. Moreover, for $m\in \NN$, letting 
$\phi_m=I_m\otimes \phi\colon M_m\otimes \cS\ra M_m\otimes M_k$, 
by means of the canonical identification of $M_m\otimes \cS$ with $M_m(\cS)$, 
the $C^*$-algebra of all $m\times m$ block-matrices with entries elements from 
$\cS$, it follows that
\begin{equation*} \phi_m([a_{i,j}]_{i,j=1}^m)=[\phi(a_{i,j})]_{i,j=1}^m,\quad 
[a_{i,j}]_{i,j=1}^m\in M_m(\cS).
\end{equation*}
Then, $\phi$ is called \emph{$m$-positive} if $\phi_m$ is a positive map, and it is 
called \emph{completely positive} 
if it is $m$-positive for all $m\in\NN$. However, positive 
semidefiniteness in the sense of \eqref{e:psd} cannot be defined, at this level of
generality.

To any linear map $\phi\colon\cS\ra M_k$, where $\cS\subseteq M_n$ is some 
linear subspace, one associates a linear functional 
$s_\phi\colon M_{k}(\cS)\ra \CC$, via the canonical identification of 
$M_k(\cS)\simeq M_k\otimes \cS$, by
\begin{align}\label{e:swdir} s_\phi([A_{i,j}]_{i,j=1}^k)
& =\sum_{i,j=1}^k\langle \phi(A_{i,j}) e_j^{(k)},e_i^{(k)}\rangle_{\CC^k}\\
& = \langle (I_k\otimes \phi([A_{i,j}]_{i,j=1}^k))e^{(k)},e^{(k)}\rangle _{\CC^{k^2}}
\nonumber\\
& =\langle [\phi(A_{i,j})]_{i,j=1}^k e^{(k)},e^{(k)}\rangle_{\CC^{k^2}}\nonumber
\end{align} where $[A_{i,j}]_{i,j=1}^k\in M_k(\cS)$, that is, it is a $k\times k$ 
block-matrix, in which each block $A_{i,j}$ is an $n\times n$ matrix from $\cS$,
and $e^{(k)}$ is defined by
\begin{equation}\label{e:e} e^{(k)}=e_1^{(k)}\oplus\cdots\oplus 
e_k^{(k)}\in \CC^{k^2}=\CC^k\oplus \cdots \oplus \CC^k.
\end{equation}
The formula \eqref{e:swdir} establishes a linear isomorphism
\begin{equation}\label{e:swdirect} \cB(\cS,M_k)\ni \phi\mapsto s_\phi\in (M_k\otimes 
\cS)^*\simeq \cB(M_k\otimes \cS,\CC),
\end{equation}
with the inverse transformation
\begin{equation}\label{e:swinverse} 
(M_k\otimes 
\cS)^*\simeq \cB(M_k\otimes \cS,\CC) \ni s\mapsto \phi_s\in\cB(\cS,M_k)
\end{equation}
given by the formula
\begin{equation}\label{e:swinv} \phi_s(A)=[s(E_{i,j}^{(k)}\otimes A)]_{i,j=1}^k,\quad 
A\in \cS.
\end{equation}

The importance of the Smith-Ward functional relies on the facts gathered 
in the following theorem: the 
equivalence of (a) and (d) is a particular case of the Arveson's 
Hahn-Banach Theorem \cite{Arveson}, 
while the equivalence of (a), (b), and (d) is essentially 
due to R.R.~Smith and J.D.~Ward
\cite{SmithWard} as another proof of Arveson's result.

\begin{theorem}\label{t:sw} Let $\cS$ be a $*$-subspace of $M_n$ that is linearly 
generated by $\cS^+$ and let 
$\phi\colon\cS\ra M_k$ be a linear map. The following assertions are equivalent:
\begin{itemize}
\item[(a)] $\phi$ is completely positive.
\item[(b)] $\phi$ is $k$-positive.
\item[(c)] $s_\phi$ is a positive functional.
\item[(d)] There exists $\tilde\phi\in\mathrm{CP}(M_k,M_n)$ that extends $\phi$.
\end{itemize}
\end{theorem}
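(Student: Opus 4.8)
The plan is to establish the cycle $(a)\Ra(b)\Ra(c)\Ra(d)\Ra(a)$. Two of these arrows are purely formal. For $(a)\Ra(b)$ there is nothing to prove, since by definition complete positivity means $m$-positivity for every $m\in\NN$, in particular for $m=k$. For $(d)\Ra(a)$ I would note that a completely positive extension restricts to a completely positive map: if $\widetilde\phi\in\mathrm{CP}(M_n,M_k)$ extends $\phi$ and $X\in M_m(\cS)$ is positive semidefinite, then $X$ is positive semidefinite as an element of $M_m(M_n)$ as well, and since the entries of $X$ lie in $\cS$, where $\phi$ and $\widetilde\phi$ agree, we get $\phi_m(X)=\widetilde\phi_m(X)\geq0$; hence $\phi$ is $m$-positive for every $m$, that is, completely positive.

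For $(b)\Ra(c)$ the defining formula for the Smith--Ward functional does all the work. By the second line of \eqref{e:swdir}, for $X=[A_{i,j}]_{i,j=1}^k\in M_k(\cS)$ one has $s_\phi(X)=\langle\phi_k(X)e^{(k)},e^{(k)}\rangle$, where $\phi_k=I_k\otimes\phi$ and $e^{(k)}$ is the vector \eqref{e:e}. If $X\geq0$ in $M_k(\cS)\subseteq M_{kn}$, then $k$-positivity of $\phi$ gives $\phi_k(X)\geq0$, whence $s_\phi(X)=\langle\phi_k(X)e^{(k)},e^{(k)}\rangle\geq0$; so $s_\phi$ is positive. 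Note that this argument uses exactly $k$-positivity, which is precisely why $(b)$, rather than full complete positivity, already suffices.

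The heart of the proof is $(c)\Ra(d)$. First I would observe that, by the standing hypothesis that $\cS$ is linearly generated by $\cS^+$ together with Proposition~\ref{p:star}, the $*$-subspace $M_k(\cS)\simeq M_k\otimes\cS$ of $M_{kn}$ is again linearly generated by its positive cone. Since $s_\phi$ is a positive linear functional on $M_k(\cS)$, the Kantorovich-type extension result Lemma~\ref{l:kant} furnishes a positive linear functional $\widetilde s\colon M_{kn}\ra\CC$ that extends $s_\phi$. I would then set $\widetilde\phi:=\phi_{\widetilde s}\colon M_n\ra M_k$ via the inverse Smith--Ward formula \eqref{e:swinv}. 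Because \eqref{e:swinv} inverts the correspondence $\phi\mapsto s_\phi$ and $\widetilde s$ agrees with $s_\phi$ on $M_k\otimes\cS$, the map $\widetilde\phi$ restricts to $\phi$ on $\cS$, so it is indeed an extension. What remains is to show that $\widetilde\phi$ is completely positive, and for this I would use the Choi matrix together with the order-isomorphism \eqref{e:cpmp}: since $\widetilde s$ is a positive functional on the full matrix algebra $M_{kn}$, it is given by a positive semidefinite density matrix $D$, and a direct computation from \eqref{e:prima} and \eqref{e:swinv} identifies the Choi matrix $\Phi_{\widetilde\phi}$ with the image of $D$ under a fixed permutation of the two tensor legs followed by a transpose. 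Since such a rearrangement preserves positive semidefiniteness, $\Phi_{\widetilde\phi}\geq0$, and \eqref{e:cpmp} yields $\widetilde\phi\in\mathrm{CP}(M_n,M_k)$.

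The hard part will be exactly this last verification: the careful bookkeeping, in the index conventions fixed by \eqref{e:prima} and \eqref{e:swinv}, that matches $\Phi_{\widetilde\phi}$ with a positivity-preserving rearrangement of the density matrix of $\widetilde s$. A cleaner variant that I would keep in reserve is to verify the positive-semidefiniteness characterisation of complete positivity directly for $\widetilde\phi$: for $h_1,\dots,h_m\in\CC^k$ and $A_1,\dots,A_m\in M_n$ the relevant sum $\sum_{p,q}\langle\widetilde\phi(A_p^*A_q)h_q,h_p\rangle$ can be rewritten as $\widetilde s(\Xi)$ with $\Xi=\sum_{p,q}w_pw_q^*\otimes A_p^*A_q\in M_k\otimes M_n$ and $w_p=\overline{h_p}$; here $\Xi$ is manifestly positive semidefinite, being the entrywise complex conjugate of a Gram matrix, so positivity of $\widetilde s$ closes the argument. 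Either way, the single delicate point is getting the index conventions to line up with those of \eqref{e:prima} and \eqref{e:swdir}.
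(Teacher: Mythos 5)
Your proposal is correct and follows essentially the same route as the paper: the same cycle of implications, with the one nontrivial step (c)$\Ra$(d) handled by extending $s_\phi$ to a positive functional on $M_{kn}$ via the Kantorovich-type Lemma~\ref{l:kant} (after noting that $M_k(\cS)$ is generated by its positive cone) and pulling the extension back through \eqref{e:swinv}; your ``reserve'' verification of complete positivity is exactly the paper's computation, since $\Xi=\bigl(\sum_q w_q^*\otimes A_q\bigr)^*\bigl(\sum_q w_q^*\otimes A_q\bigr)\geq 0$, which is a cleaner justification than your ``conjugate of a Gram matrix'' phrasing. Your primary suggestion --- checking positivity of the Choi matrix of $\widetilde\phi$ through the density matrix of $\widetilde s$ and the shuffle relation of Proposition~\ref{p:trace}, then invoking \eqref{e:cpmp} --- is a valid minor variant of that final step rather than a genuinely different proof.
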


\begin{proof}
Clearly (a) implies (b), the fact that (b) implies (c) follows from the definition of 
$s_\phi$ as
in \eqref{e:swdir}, while (d) implies (a) is clear as well. The only nontrivial part is 
(c) implies (d). Briefly, following the proofs of Theorem~6.1 
and Theorem~6.2 in \cite{Pau}, the idea is to use Kantorovich's Theorem as in 
Lemma~\ref{l:kant} in order to extend 
$s_\phi$ to a positive functional $\widetilde s$ on $M_k\otimes M_n\simeq
M_k(M_n)\simeq M_{kn}$ then, in view of \eqref{e:swinv}, let $\widetilde 
\phi\colon M_{k}\ra M_n$ be defined by
\begin{equation}\label{e:swinvdoi} \widetilde\phi(A)=[\widetilde s(E_{i,j}^{(k)}
\otimes A)]_{i,j=1}^k,\quad A\in M_n,
\end{equation} and note that $\widetilde\phi$ extends $\phi$. Finally, in order to 
prove that $\widetilde\phi$ is completely positive it is sufficient to prove that it is 
positive semidefinite, see \eqref{e:psd}. To see this, let $m\in\NN$, $A_1,\ldots,A_m
\in M_n$, and $h_1,\ldots,h_m\in \CC^k$ be arbitrary. Then, letting
\begin{equation*} h_j=\sum_{l=1}^k \lambda_{k,l}e_l^{(k)},\quad j=1,\ldots,m,
\end{equation*}
we have
\begin{align*}
\sum_{i,j=1}^m \langle \widetilde\phi(A_i^*A_j)h_j,h_i\rangle_{\CC^k} 
& = \sum_{i,j=1}^m\sum_{l,p=1}^k \lambda_{j,l}\overline{\lambda}_{i,p}\langle
\widetilde\phi(A_i^*A_j)e_l^{(k)},e_p^{(k)}\rangle_{\CC^k} \\
& = \sum_{i,j=1}^m\sum_{l,p=1}^k \lambda_{j,l}\overline{\lambda}_{i,p}\widetilde 
s(A_i^*A_j\otimes E_{p,l}^{(k)}) \\
\intertext{then, for each $i=1,\ldots,m$, letting $B_i$ denote the $k\times k$ matrix 
whose first row is $\lambda_{i,1},\ldots,\lambda_{i,k}$ and all the others are $0$, 
hence
$B_i^*B_j=\sum_{l,p=1}^k \lambda_{j,l}\overline{\lambda}_{i,p} E_{p,l}^{(k)}$,
we have
}
& = \sum_{i,j=1}^m \widetilde s(A_i^*A_j\otimes B_i^*B_j) \\
& = \widetilde s((\sum_{i=1}^m A_i\otimes B_i)^* (\sum_{j=1}^m A_j\otimes B_j)) 
\geq 0.\qedhere
\end{align*}
\end{proof}

Actually, once the facts described in Theorem~\ref{t:sw} are settled, 
it is easy to observe that \emph{ \eqref{e:swdir} and 
\eqref{e:swinv} establish an affine and order preserving bijection 
between the cone $\mathrm{CP}(\cS,M_k)$ and the cone 
$\{ s\colon M_k(\cS)\ra\CC\mid  s\mbox{ linear and positive}\}$}.

\subsection{The Density Matrix.}\label{ss:dm}
We consider $M_{m}$ as a Hilbert space with the Hilbert-Schmidt 
inner product, that is, $\langle C,D\rangle_{\mathrm{HS}}=\tr(D^*C)$, for all 
$C,D\in M_{m}$. Thus, to any linear functional
$s\colon M_{m}\ra\CC$,
by the Representation Theorem for
(bounded) linear functionals on a Hilbert space, in our case $M_{m}$ with the 
Hilbert-Schmidt inner product, one associates uniquely a matrix $D_s\in M_{m}$, 
such that
\begin{equation}\label{e:density}s(C)=\tr(D_s^*C),\quad C\in M_{m}.
\end{equation}
Clearly, $s\mapsto D_s$ is a conjugate 
linear bijection between the dual space of $M_{m}$ and $M_{m}$. 

\begin{remark}\label{r:density}
Using the properties of the trace, it follows that $s$ is 
a positive functional if and only if the matrix $D_s$ is positive semidefinite. Indeed,
if $D_s$ is positive semidefinite, then for all positive semidefinite matrix in 
$M_{m}$, we have $\tr(D_sC)=\tr(C^{1/2}D_sC^{1/2})\geq 0$. 
Conversely, if $\tr(D_sC)\geq 0$ for all positive semidefinite $m\times m$ matrix 
$C$, then for any vector $v$ of length $m$ we have $0\leq \tr (D_s vv^*)=
\tr(v^*D_sv)=v^*D_sv$, hence $D_s$ is positive semidefinite.\end{remark}

From the previous remark, if $s$ is a \emph{state} on $M_{m}$, 
that is, a unital positive linear functional on $M_{m}$, then $D_s$ 
becomes a \emph{density matrix}, that is, a positive semidefinite matrix of trace one.
Slightly abusing this fact, we call $D_s$ the \emph{density matrix} 
associated to $s$, in general.

On the other hand, since the correspondence between linear maps 
$\phi\colon M_n\ra M_k$ and Choi matrices $\Phi_\phi\in M_{kn}$ is a linear 
isomorphism and, via the Smith-Ward linear functional $s_\phi$, the correspondence 
between $\phi$ and the density matrix $D_{s_\phi}$ is a conjugate linear 
isomorphism, it is natural to ask for an explicit relation between the Choi matrix 
$\Phi_\phi$ and $D_{s_\phi}$. In order to do this, we 
first recall the definition of the \emph{canonical shuffle} operators. Briefly, 
this comes
from the two canonical identifications of $\CC^n\otimes \CC^k$ with $\CC^{kn}$, 
more precisely, for each $l\in \{1,\ldots,n\}$ and each $i\in \{1,\ldots,k\}$, we let
\begin{equation}\label{e:shuffle} Ue_{(i-1)n+l}^{(kn)}=e_{(l-1)k+i}^{(kn)}.
\end{equation}
It is clear that $U$ is a unitary operator $\CC^{kn}\ra \CC^{kn}$, hence an 
orthogonal $kn\times kn$ matrix. Also, for a matrix $X$, out of the adjoint 
matrix $X^*$, we consider its transpose $X^T$ as well as its entrywise complex 
conjugate $\overline{X}$.

\begin{proposition}\label{p:trace} For any linear map $\phi\colon M_n\ra M_k$ and 
letting $\Phi$ denote its Choi matrix, cf.\ \eqref{e:choi}, the 
density matrix $D$ associated to the Smith-Ward linear functional 
$s_\phi$, cf.\ \eqref{e:density} and \eqref{e:swdir}, is
\begin{equation}\label{e:jam} D=U^* \overline\Phi U,
\end{equation} where $U$ is the canonical shuffle unitary operator defined at 
\eqref{e:shuffle}.
\end{proposition}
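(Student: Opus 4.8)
The plan is to verify the matrix identity \eqref{e:jam} entrywise, by evaluating both sides on the standard matrix units of $M_{kn}$ and matching the results. The starting point is that the conjugate-linear correspondence \eqref{e:density} is given entrywise by $(D)_{\alpha,\beta}=\overline{s_\phi(E_{\alpha,\beta}^{(kn)})}$ for all $\alpha,\beta=1,\ldots,kn$, since $s_\phi(E_{\alpha,\beta}^{(kn)})=\tr(D^*E_{\alpha,\beta}^{(kn)})=\overline{(D)_{\alpha,\beta}}$. Thus it suffices to compute $s_\phi$ on matrix units and to compare with the entries of $U^*\overline\Phi U$.

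First I would evaluate $s_\phi$ on the tensor matrix units. Taking $\cS=M_n$ and using the canonical identification $M_k(M_n)\simeq M_k\otimes M_n$, in which the basis vector $e_i^{(k)}\otimes e_l^{(n)}$ of $\CC^k\otimes\CC^n$ is indexed by $(i-1)n+l$, the matrix unit $E_{i,j}^{(k)}\otimes E_{l,m}^{(n)}$ corresponds to $E_{(i-1)n+l,(j-1)n+m}^{(kn)}$. Feeding $E_{i,j}^{(k)}\otimes E_{l,m}^{(n)}$ into the defining formula \eqref{e:swdir} leaves only one nonzero block, so that
\begin{equation*}
s_\phi\bigl(E_{i,j}^{(k)}\otimes E_{l,m}^{(n)}\bigr)=\langle\phi(E_{l,m}^{(n)})e_j^{(k)},e_i^{(k)}\rangle=\phi_{(l-1)k+i,(m-1)k+j},
\end{equation*}
the last equality being exactly \eqref{e:prima}. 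Hence $(D)_{(i-1)n+l,(j-1)n+m}=\overline{\phi_{(l-1)k+i,(m-1)k+j}}$. The key structural point to keep straight here is that the Smith--Ward/tensor ordering places the $M_k$-index first, $(i-1)n+l$, whereas the Choi reindexing \eqref{reindexing} places the $M_n$-index first, $r=(l-1)k+i$.

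Finally I would read off the entries of the right-hand side. The canonical shuffle \eqref{e:shuffle} is precisely the permutation matrix converting between these two orderings, $Ue_{(i-1)n+l}^{(kn)}=e_{(l-1)k+i}^{(kn)}$, and it is real orthogonal, so $U^*=U^{-1}=U^T$. Writing $\gamma=(i-1)n+l$ and $\delta=(j-1)n+m$ and using $Ue_\gamma^{(kn)}=e_{(l-1)k+i}^{(kn)}$, $Ue_\delta^{(kn)}=e_{(m-1)k+j}^{(kn)}$, one gets
\begin{equation*}
\bigl(U^*\overline\Phi U\bigr)_{\gamma,\delta}=\langle\overline\Phi\,Ue_\delta^{(kn)},Ue_\gamma^{(kn)}\rangle=\overline{\Phi_{(l-1)k+i,(m-1)k+j}}=\overline{\phi_{(l-1)k+i,(m-1)k+j}},
\end{equation*}
which coincides with $(D)_{\gamma,\delta}$ computed above. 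Since $\gamma$ and $\delta$ range over all of $\{1,\ldots,kn\}$ as the pairs $(i,l)$ and $(j,m)$ vary, the identity $D=U^*\overline\Phi U$ follows. I do not expect a genuine obstacle in this argument; the only delicate part is the disciplined bookkeeping of the two lexicographic orderings together with the single complex conjugation, the latter entering through the conjugate-linear representation \eqref{e:density} and accounting for the appearance of $\overline\Phi$ rather than $\Phi$.
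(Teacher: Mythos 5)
Your proof is correct and follows essentially the same route as the paper's: both evaluate $s_\phi$ on the tensor matrix units $E_{i,j}^{(k)}\otimes E_{l,m}^{(n)}$ in two ways --- once via the density matrix to get $\overline{d}_{(i-1)n+l,(j-1)n+m}$ and once via \eqref{e:prima} to get $\phi_{(l-1)k+i,(m-1)k+j}$ --- and then observe that the resulting index permutation together with the complex conjugation is exactly the conjugation by the shuffle $U$ applied to $\overline\Phi$. Your entrywise verification of $U^*\overline\Phi U$ just makes explicit the final step the paper leaves to the reader.
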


\begin{proof} 
We first note that $\{E^{(k)}_{i,j}\otimes E^{(n)}_{l,m}\mid 
i,j=1,\ldots,k,\ l,m=1,\ldots,n\}$ is an orthonormal basis of $M_k\otimes 
M_n$ with respect to the Hilbert-Schmidt inner product, and that, with respect to the 
canonical identification of $M_k\otimes M_n\simeq M_k(M_n)$, that is, when viewed
as block $k\times k$ matrices with each entry an $n\times n$ matrix, 
with $M_{kn}$, we have
\begin{equation*} E^{(k)}_{i,j}\otimes E^{(n)}_{l,m}=E^{(kn)}_{(i-1)n+l,(j-1)n+m},\quad 
i,j=1,\ldots,k,\ l,m=1,\ldots,n.
\end{equation*} 
Fix $i,j\in\{1,\ldots,k\}$ and $l,m\in\{1,\ldots,n\}$. Then,
\begin{align} s_\phi(E^{(k)}_{i,j}\otimes E^{(n)}_{l,m}) 
  &= s_\phi(E^{(kn)}_{(i-1)n+l,(j-1)n+m}) \nonumber\\
&= \tr(D^* E^{(kn)}_{(i-1)n+l,(j-1)n+m})=\overline d_{(i-1)n+l,(j-1)n+m},\label{e:sefe}
\end{align} where $D=[d_{r,s}]_{r,s=1}^{kn}$ is the matrix representation of $D$, 
more precisely,
\begin{equation*} D=\sum_{r,s=1}^{kn} d_{r,s} E^{(kn)}_{r,s}.
\end{equation*} 
On the other hand, from \eqref{e:prima} we have
\begin{equation}\label{e:sefedoi} s_\phi(E_{i,j}^{(k)}\otimes E_{l,m}^{(n)}) 
=\langle \phi(E_{l,m}^{(n)}) e_j^{(k)},e_i^{(k)}\rangle_{\CC^k}=\phi_{(l-1)k+i,(m-1)k+j}.
\end{equation}
Therefore, in view of \eqref{e:sefe}, \eqref{e:sefedoi} we have
\begin{equation*}  \overline d_{(i-1)n+l,(j-1)n+m}
= s_\phi(E^{(k)}_{i,j}\otimes E^{(n)}_{l,m}) = \phi_{(l-1)k+i,(m-1)k+j}, 
\end{equation*}
hence, taking into account of the definition of the canonical shuffle operator $U$ as
in \eqref{e:shuffle}, the 
equality in \eqref{e:jam} follows. 
\end{proof}

We now come back to the general case of a $*$-subspace $\cS$ in $M_m$. By
analogy with the particular case of the operator system of full matrix algebra $M_m$ 
described before, with respect to the Hilbert-Schmitd inner product 
on $M_{m}$, hence on its subspace $\cS$, to any
linear functional $s\colon \cS\ra \CC$ one uniquely associates an 
$m\times m$ matrix $D_s\in \cS\subseteq M_{m}$ such that 
\begin{equation}\label{e:densityo}
s(C)=\tr(D^*_sC),\quad C\in \cS,
\end{equation} 
and we continue to call $D_s$ the \emph{density matrix} associated to $s$. Clearly, 
this establishes a conjugate linear isomorphism between 
the dual space of $\cS$ and $\cS$. In view of Theorem~\ref{t:sw}, we 
may ask whether the positivity of the linear functional $s$ is equivalent with 
the positive semidefiniteness of its density matrix, as in the case of the full matrix 
algebra $M_n$. Clearly, if the density matrix $D_s$ is positive semidefinite then $s$ 
is a positive linear functional but,
as the following remarks and examples show, the converse may or may not hold.

\begin{remarks}\label{r:hermitian} 
(1) If the $\cS$ is a $*$-subspace of $M_n$ and the 
linear functional $s\colon \cS\ra \CC$ is Hermitian, that is, 
$s(C^*)=\overline{s(C)}$ for all $C\in\cS$, then its density matrix $D$ is Hermitian.  
Indeed, for any $C\in\cS$ we have
\begin{equation*} s(C)=\overline{s(C^*)}=\overline{\tr(D^*C^*)}=\tr(CD)=\tr((D^*)^*C),
\end{equation*} hence, $D^*$ is also a density matrix for $s$. Since the density 
matrix is unique, it follows that $D=D^*$.

(2) If $\cS$ is a $C^*$-subalgebra of $M_m$, not necessarily unital, then for any
positive functional $s\colon \cS\ra \CC$, its density matrix $D$ is positive 
semidefinte. Indeed, in this case $D=D_+-D_-$ with $D_\pm\in\cS^+$ and 
$D_+D_-=0$ hence $0\leq s(D_-)=\tr(DD_-)=-\tr(D_-^2)$ hence $D_-=0$ and 
consequently $D\in\cS^+$.
\end{remarks}

\begin{examples}\label{ex:pp}
(1) We consider the following operator system $\cS$ in $M_3$
\begin{equation}\label{e:os1}
\cS=\bigl\{C=\left[\begin{matrix} a & 0 & b \\ 0 & a & 0 \\ c & 0 & d\end{matrix}\right]
\mid a,b,c,d\in\CC\bigl\},
\end{equation} and note that $\cS^+$ consists on those matrices $C$ as in 
\eqref{e:os1} with $c=\overline{b}$, $a,d\geq 0$, and $|b|^2\leq ad$. Let
\begin{equation*}D=\left[\begin{matrix} 1 & 0 & \sqrt{2} \\ 0 & 1 & 0 \\ \sqrt{2} & 0 & 1 
\end{matrix}\right],
\end{equation*} and note that $D\in\cS$ is Hermitian but it is not positive 
semidefinite: more precisely,
its eigenvalues are $1-\sqrt{2}$, $1$, and $1+\sqrt{2}$. 
On the other hand, for any $C\in\cS^+$, that is, with the notation as in \eqref{e:os1}, $c=\overline{b}$, $a,d\geq 0$, and $|b|^2\leq ad$, we have
\begin{align*} \tr(DC) & =a+\sqrt{2}\, \overline{b}+a +\sqrt{2}\, b +d 
=2a+d+2\sqrt{2}\Re b\\
& \geq 2a+d-2\sqrt{2}|b|\geq 2a+d-2\sqrt{2}\sqrt{ad}
=(\sqrt{2a}-\sqrt{d})^2\geq 0,
\end{align*} hence the linear functional $\cS\ni C\mapsto \tr(DC)\in\CC$ 
is positive.

(2) In $M_2$ we consider the Pauli matrices
\begin{equation}\label{e:pauli} \sigma_0=\left[\begin{matrix} 1 & 0 \\ 0 & 1 \end{matrix}\right],\quad \sigma_1=\left[\begin{matrix} 0 & -\iac \\ \iac & 0 \end{matrix}\right],\quad
\sigma_2=\left[\begin{matrix} 0 & 1 \\ 1 & 0 \end{matrix}\right],\quad
\sigma_3=\left[\begin{matrix} 1 & 0 \\ 0 & -1 \end{matrix}\right],
\end{equation}
that makes an orthogonal basis of $M_2$ with respect to the Hilbert-Schmidt 
inner product. We consider $\cS$ the linear span of $\sigma_0$, $\sigma_1$, and 
$\sigma_2$, more precisely,
\begin{equation}\label{e:pa3} \cS=\{ C=\left[\begin{matrix} \alpha & \beta \\ 
\gamma & \alpha\end{matrix}\right] \mid \alpha,\beta,\gamma\in\CC\}.
\end{equation} Note that $\cS$ is an operator system but not an algebra. However, 
we show that, an arbitrary matrix $D\in\cS$ is positive semidefinite if and 
only if $\tr(D^*C)\geq 0$ for all $C\in\cS^+$.

To this end, note that a matrix $C$ as in \eqref{e:pa3} is positive semidefinite if 
and only if $\gamma=\overline{\beta}$, $\alpha\geq 0$, and $|\beta|^2\leq \alpha^2$. 
Let $D\in\cS$, that is,
\begin{equation*} D=\left[\begin{matrix} a & b \\ 
c & a\end{matrix}\right], 
\end{equation*}  such that $\tr(D^*C)\geq 0$ for all $C\in\cS^+$. From 
Remark~\ref{r:hermitian} it follows that $D$ is Hermitian, hence $a$ is real and 
$c=\overline{b}$, and the condition $\tr(D^*C)\geq 0$ can be equivalently written as
\begin{equation}\label{e:cond} a\alpha+\Re(\overline\beta b)\geq 0\mbox{ whenever } 
\alpha\geq 0\mbox{ and } |\beta|^2\leq \alpha^2.
\end{equation} Letting $\beta=0$ implies that $a\geq 0$. We prove that $|b|^2\geq 
a^2$. If $a=0$ then from \eqref{e:cond} it follows that $b=0$. If $a>0$ and 
$|b|^2>a^2$ then letting $\alpha=a$ and $\beta=-a|b|/\overline{b}$, 
we obtain $0\leq a\alpha+
\Re(\overline \beta b)=a^2-a|b|=a(a-|b|)<0$, a contradiction. Hence $|b|^2\geq 
a^2$ must hold, and we have proven that $D$ is positive semidefinite.
\end{examples}

The concept of density matrices associated to linear functionals on $*$-subspaces 
opens the possibility of generalising the concept of a Choi matrix for linear 
maps with domains $*$-subspaces. Note that the definition of the Choi matrix, see 
\eqref{e:choi}, involves essentially the matrix units which, generally, 
are not available in 
operator systems. However, in view of Proposition~\ref{p:trace} we can proceed as 
follows. 
First consider the Smith-Ward functional $s_\phi$ defined as in \eqref{e:swdir},
then consider the density matrix $D_{\phi}$ associated to $s_\phi$ as in 
\eqref{e:densityo}, and finally define $\Phi_\phi$ by
\begin{equation} \label{e:choios} \Phi_\phi =U\,\overline{D_\phi}\, U^*,
\end{equation} where the bar denotes the entrywise complex 
conjugation and $U$ denotes 
the canonical shuffle unitary operator as in \eqref{e:shuffle}. Clearly $\Phi_\phi$ is an 
$kn\times kn$ matrix and, in case $\phi\in\mathrm{CP}(\cS,M_k)$, the Choi matrix 
$C_\phi$ defined as in \eqref{e:choios} is Hermitian but, at this level of generality, 
depending on the $*$-subspace $\cS$, its 
positive definiteness is not guaranteed. However, if the Choi matrix 
$\Phi_\phi$ is positive semidefinite, then $\phi\in \mathrm{CP}(\cS,M_k)$.

\subsection{Operator Systems Generated by Matrix Units.}\label{ss:osgmu}
For a fixed natural  number $m$ let $\cS$ be an operator system in $M_m$. We are 
interested by the special case when $\cS$ is linearly generated by a subset of matrix 
units in $M_m$, that is, there exists a subset 
$S\subseteq \{1,\ldots,m\}^2$ such that $\cS=\lin\{E^{(m)}_s 
\mid s\in S\}$. 

\begin{remarks}\label{r:mut} In the following we use the interpretation of subsets 
$S\subseteq \{1,\ldots,m\}^2$ as relations on the set 
$\{1,\ldots,m\}$. Let $S$ be a relation on $\{1,\ldots,m\}$ and let 
$\cS=\lin\{E^{(m)}_s \mid s\in S\}$ be the linear subspace in $M_m$ generated by 
the matrix unit indexed in $S$. 

\nr{1} The linear space $\cS$ is an operator system in $M_m$ if and only if $S$ 
is reflexive and symmetric.

\nr{2} The linear space $\cS$ is a unital $*$-subalgebra of $M_m$ if and only if $S$
is an equivalence relation.
\end{remarks}

\begin{theorem}\label{t:mut} 
Let $\cS$ be an operator system in $M_m$ linearly generated by matrix units. 
The following assertions are equivalent:
\begin{itemize}
\item[(a)] Any positive linear functional $s\colon\cS\ra \CC$ has a positive 
semidefinite density matrix.
\item[(b)] $\cS$ is an algebra (and hence a unital $*$-subalgebra of $M_m$).
\end{itemize}
\end{theorem}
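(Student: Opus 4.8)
The plan is to prove the equivalence in both directions, with the interesting content being the contrapositive of (a)$\Ra$(b). The easy direction is (b)$\Ra$(a): if $\cS$ is a unital $*$-subalgebra of $M_m$, then by Remarks~\ref{r:hermitian}(2) every positive linear functional on $\cS$ has a positive semidefinite density matrix, so there is nothing to do beyond citing that remark.

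For the substantive direction (a)$\Ra$(b), I would argue by contraposition: assuming $\cS$ is \emph{not} an algebra, I would construct a positive linear functional $s$ on $\cS$ whose density matrix $D_s$ fails to be positive semidefinite. First I translate the algebraic hypotheses into combinatorics. By Remarks~\ref{r:mut}, writing $\cS=\lin\{E^{(m)}_s\mid s\in S\}$, the operator system condition says $S$ is reflexive and symmetric, while being an algebra is equivalent to $S$ being transitive (i.e.\ an equivalence relation). So the standing assumption is that $S$ is reflexive and symmetric but \emph{not} transitive, meaning there exist indices $p,q,r$ with $(p,q)\in S$ and $(q,r)\in S$ but $(p,r)\notin S$. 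I would also observe that, because of the matrix-unit structure, the density matrix $D_s$ associated to a functional $s$ by \eqref{e:densityo} records exactly the values $s(E_s^{(m)})$ on the generating units: concretely $D_s=\sum_{s\in S}\overline{s(E^{(m)}_s)}\,E^{(m)}_{s}$ (up to the conjugation bookkeeping in \eqref{e:densityo}), so the off-diagonal entries of $D_s$ are free parameters indexed by $S$ and all entries outside $S$ are forced to be $0$.

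The heart of the construction is then to exploit the missing transitive pair $(p,r)$. The idea is that positivity of $s$ is tested against $\cS^+$, and the positive matrices in $\cS$ ``see'' a certain principal submatrix, whereas $D_s$ is forced to have a zero in the $(p,r)$ slot where a genuinely positive matrix would want a nonzero correlation. I would choose $s$ so that its density matrix, restricted to the rows and columns indexed by $\{p,q,r\}$, looks like the $3\times 3$ pattern from Examples~\ref{ex:pp}(1): put mass on the diagonal entries and on the $(p,q),(q,p),(q,r),(r,q)$ positions (all of which lie in $S$, hence are available), while the $(p,r),(r,p)$ entries are pinned to $0$ since $(p,r)\notin S$. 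The task is to calibrate the magnitudes so that (i) $\tr(D_s C)\geq 0$ for every $C\in\cS^+$, using that any such $C$ must itself vanish in the $(p,r)$ position and hence only couples to the available entries of $D_s$, and (ii) $D_s$ has a strictly negative eigenvalue on the $\{p,q,r\}$ block, which is arranged exactly as in the $1-\sqrt2<0$ eigenvalue computation of Examples~\ref{ex:pp}(1).

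The main obstacle I anticipate is verifying positivity of $s$ in full generality, i.e.\ checking $\tr(D_s C)\geq 0$ for \emph{all} $C\in\cS^+$ rather than just for the rank-one correlations on the $\{p,q,r\}$ block. The cleanest route is to show that the functional $s$ factors through compression to the $\{p,q,r\}$ coordinates: because $D_s$ is supported (outside the diagonal) only on entries within that index block, $\tr(D_s C)$ depends only on the corresponding principal submatrix $C_{\{p,q,r\}}$ of $C$, which is itself positive semidefinite whenever $C\in\cS^+$. This reduces the verification to the genuinely $3\times 3$ inequality already carried out in Examples~\ref{ex:pp}(1), and I would invoke that computation (with the appropriate scaling $\sqrt2$ replaced by whatever constant makes the off-diagonal mass dominate while preserving the Cauchy--Schwarz-type bound $|b|^2\le ad$ satisfied by positive $C$). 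Thus the construction succeeds, $D_s$ is not positive semidefinite, and (a) fails, completing the contrapositive.
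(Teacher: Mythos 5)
Your overall architecture coincides with the paper's: the direction (b)$\Ra$(a) is exactly Remark~\ref{r:hermitian}(2), and for (a)$\Ra$(b) the paper also argues by contraposition, uses the relational description of Remarks~\ref{r:mut} to produce indices $p,q,r$ with $(p,q),(q,r)\in S$ but $(p,r)\notin S$, builds an indefinite Hermitian density matrix supported on the diagonal and on the positions $(p,q),(q,p),(q,r),(r,q)$, and reduces the positivity check for general $m$ to the $3\times3$ principal block (your explicit compression argument is in fact cleaner than the paper's terse Step 3, and your direct use of non-transitivity avoids the paper's case count over $|S|\in\{3,5,7,9\}$ for $m=3$).

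There is, however, one concrete gap in the verification step. You propose to ``invoke the computation of Examples~\ref{ex:pp}(1)'' and to calibrate the off-diagonal mass ``while preserving the Cauchy--Schwarz-type bound $|b|^2\le ad$.'' Neither works as stated. First, Examples~\ref{ex:pp}(1) concerns a different zero pattern: there the off-diagonal mass sits only at $(1,3),(3,1)$ and the estimate $2a+d-2\sqrt2\sqrt{ad}=(\sqrt{2a}-\sqrt d)^2\ge0$ crucially exploits that the $(1,1)$ and $(2,2)$ entries of $C$ are forced to be \emph{equal}, which is not the case for a span of matrix units. Second, and more seriously, for your pattern (relabel $q=1$, $p=2$, $r=3$, so $D=\bigl[\begin{smallmatrix}1&\rho&\rho\\\rho&1&0\\\rho&0&1\end{smallmatrix}\bigr]$ and $C=\bigl[\begin{smallmatrix}a&b&c\\\bar b&d&0\\\bar c&0&e\end{smallmatrix}\bigr]\ge0$) the $2\times2$ minor bounds $|b|^2\le ad$, $|c|^2\le ae$ alone only give $|b|+|c|\le\sqrt{2a(d+e)}$ (attained at $d=e$), and $a+d+e-2\rho\sqrt{2a(d+e)}\ge0$ for all $a,d,e\ge0$ forces $\rho\le1/\sqrt2$ --- exactly the range in which $D$ is positive semidefinite ($\det D=1-2\rho^2$). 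So no choice of constant makes your argument close using only those bounds. The repair is to use the full $3\times3$ positivity of the compressed matrix, namely $\det C=ade-|b|^2e-|c|^2d\ge0$, which by Cauchy--Schwarz yields the sharper estimate $(|b|+|c|)^2\le(|b|^2/d+|c|^2/e)(d+e)\le a(d+e)$; then $\tr(DC)\ge a+d+e-2\rho\sqrt{a(d+e)}\ge(\sqrt a-\sqrt{d+e})^2\ge0$ for any $\rho\le1$, while any $\rho\in(1/\sqrt2,1]$ makes $D$ indefinite. This is precisely the inequality the paper establishes in Step~2 of its proof; with it inserted, your argument goes through.
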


\begin{proof} (a)$\Ra$(b). We divide the proof in three steps:

\emph{Step 1.}
First observe that for $m=1$ or $m=2$ any 
operator system $\cS\subseteq M_m$ generated by a set of matrix units
is an algebra hence, in view of  Remark~\ref{r:hermitian}.(2), there is nothing to 
prove.

\emph{Step 2.}
We consider $m=3$ so let $\cS$ be an operator system in $M_3$ that is 
linearly generated by a reflexive and symmetric relation $S\subseteq\{1,2,3\}^2$.
Then $S$ necessarily contains all the diagonal $S_\mathrm{d}=\{(1,1),(2,2),(3,3)\}$. 
On the other hand, due to the symmetry 
condition on $S$, it may contain only $3$, $5$, $7$, or $9$ elements. If $S$ 
contains either $3$, $5$, or $9$ elements it is easy to see that $\cS$ is an algebra. 
Thus, we are left only with the investigation of the case when $S$ has exactly $7$ 
elements, and these are the cases when 
$S=S_\mathrm{d}\cup\{(1,2),(2,1),(1,3),(3,1)\}$, 
$S=S_\mathrm{d}\cup\{(1,2),(2,1),(3,2),(2,3)\}$, 
$S=S_\mathrm{d}\cup\{(2,3),(3,2),(1,3),(3,1)\}$. 
Note that these three cases correspond to a circular permutation of one of them 
and hence the proof for any one of these would be sufficient.

Let $S=\{(1,1),(2,2),(3,3),(1,2),(2,1),(1,3),(3,1)\}$. In the following we prove that the 
corresponding operator system $\cS=\lin S$, that is not an algebra, has at least one 
(actually we prove that there are infinitely many) positive linear functional 
whose density matrix is not positive semidefinite. To see this, first note that
\begin{equation}\label{e:cese} \cS=\{ C=
\left[\begin{matrix} a & b & c \\ f & d & 0 \\ g & 0 & e\end{matrix}\right] \mid 
a,b,c,d,e,f,g \in \CC\},
\end{equation} and that $\cS^+$ is the collection of all matrices $C$ as in 
\eqref{e:cese} subject to the following conditions
\begin{equation}\label{e:condes} a\geq 0,\ d\geq 0,\ e\geq 0, f=\overline{b}, 
g=\overline{c},\ |b|^2\leq ad,\ |c|^2\leq ae,\ |b|^2e+|c|^2 d\leq ade. 
\end{equation}
For each $1/\sqrt{2}<\rho \leq 1$ consider the Hermitian matrix
\begin{equation}\label{e:dero} D_\rho = \left[\begin{matrix} 1 & \rho & \rho \\ 
\rho & 1 & 0 \\ \rho & 0 & 1
\end{matrix}\right].
\end{equation} It is easy to see that $D_\rho$ is indefinite for each 
$1/\sqrt{2}<\rho \leq 1$.

We prove that the corresponding functional $s_\rho=\tr(D_\rho \cdot)$ is positive 
for each $1/\sqrt{2}<\rho \leq 1$. To see this, let $C\in\cS^+$ be arbitrary, that is,
with the notation as in \eqref{e:cese}, the conditions \eqref{e:condes} must hold. 
Then
\begin{align*} s_\rho(C) & = \tr(D_\rho C)=a+d+e+2\rho\Re(b+c) \\
& \geq a+d+e-2\rho(|b|+|c|) \\
\intertext{and, taking into account that $|b|+|c|$ has its maximal value 
$\sqrt{a(e+d)}$ when the constraints \eqref{e:condes} hold, it follows that}
& \geq a+d+e-2\rho\sqrt{a(e+d)} \\
& \geq a+d+e-2\sqrt{a(e+d)}=(\sqrt{a}-\sqrt{e+d})^2\geq 0.
\end{align*}

\emph{Step 3.} Assume now that $m>3$ and assume that $\cS$ is an 
operator system in $M_m$ that is not an algebra. By Remark~\ref{r:mut}.(2), there
exist distinct $i,j,l\in\{1,\ldots,m\}$ such that $(i,j),(j,l)\in S$ but $(i,l)\not\in S$. 
Modulo a reindexing, without loss of generality we can assume that $i=2$, $j=1$, 
and $l=3$. For each $1/\sqrt{2}<\rho \leq 1$ we consider the matrix $D_\rho$ 
as in \eqref{e:dero} and let $\widetilde D_\rho=D_\rho\oplus 0\in M_m$. From what 
has been proven in Step 3 it follows that the functional 
$s_\rho=\tr(\widetilde D_\rho\cdot)$ on $M_m$ is positive but its density matrix 
$\widetilde D_\rho$ is not positive semidefinite.

(b)$\Ra$(a). This is a consequence of Remark~\ref{r:hermitian}.(2).
\end{proof}

\section{Main Results}\label{s:mr}

\subsection{The General Case.}\label{ss:gc}
Let now $A_1,\ldots,A_N\in M_n$ and $B_1,\ldots,B_N\in M_k$ be the given 
interpolation data with respect to the Interpolation Problem, see the Introduction. 
We recall the notation
$\bA=(A_1,\ldots,A_N)$, called the \emph{input data} and, similarly, 
$\bB=(B_1,\ldots,B_N)$, called the \emph{output data}.
Since we are looking for $\phi\in\mathrm{CP}(M_n,M_k)$, hence for Hermitian 
maps $\phi\colon M_n\ra M_k$, such that the interpolation condition holds
\begin{equation}\label{e:princa}\varphi(A_\nu) =B_\nu,\mbox{ \it for all }\nu=1,\ldots,N,
\end{equation}  without loss of generality we can assume 
that all $A_\nu$ and all $B_\nu$ are Hermitian, otherwise we may
increase the number of the data by splitting each entry into 
its real and its imaginary parts, respectively. 
Also, without loss of generality, 
we can assume that $A_1,\ldots,A_N$ are linearly independent, 
otherwise some linearly dependence consistency conditions on $B_1,\ldots,B_N$ 
should be imposed. 
On the other hand, since the required maps $\phi$ should be linear, the constraint
\eqref{e:princa} actually determines $\phi$ on the space 
\begin{equation}\label{e:sa} \cS_\bA=\lin\{A_1,\ldots,A_N\},
\end{equation}
which is a $*$-subspace due to the fact that 
all $A_\nu$ are Hermitian matrices. In conclusion, without loss of generality, 
we work under the following hypotheses on the data: 
\begin{itemize}
\item[(a1)] \emph{All matrices $A_1,\ldots,A_N\in M_n$ and 
$B_1,\ldots,B_N\in M_k$ are Hermitian.}
\item[(a2)] \emph{The set of matrices $\{A_1,\ldots,A_N\}$ is linearly independent.}
\end{itemize}
Thus, $\cS_\bA$ is 
$*$-subspace of $M_n$ for which $A_1,\ldots, A_N$ is a linear basis. Having in 
mind the approach of the Interpolation Problema
through the Arveson's Hahn-Banach Theorem and Smith-Ward linear functional,
$\cS_\bA$ might be required to be linearly generated by $\cS_\bA^+$. Thus, we will 
also consider special cases when, in addition to the hypotheses (a1) and (a2), the 
following condition might be imposed on the data:
\begin{itemize}
\item[(a3)] \emph{$\cS_\bA$ is linearly generated by $\cS_\bA^+$.}
\end{itemize}

\begin{remark}\label{r:compact}
Recalling the definition of $\cC_{\bA,\bB}$ as in 
\eqref{e:data}, the set of solutions of the Interpolation Problem, 
observe that $\cC_{\bA,\bB}$ is convex and closed. If
$\cS_\bA$ contains a positive definite matrix of rank $n$, in particular, if $\cS_\bA$ 
is an operator system, then $\cC_{\bA,\bB}$ is 
bounded as well, hence compact. Indeed, if $\cS_\bA$ is an operator system,
this follows from the fact, 
e.g.\ see Proposition~3.6 in \cite{Pau}, that 
$\|\phi\|= \|\phi(I_n)\|$ and, since $I_n\in \cS_\bA$, the positive semidefinite matrix  
$\phi(I_n)$ is fixed and independent of 
$\phi\in\cC_{\bA,\bB}$.  The general case follows now by Proposition~\ref{p:star}.
However, the same Proposition~\ref{p:star} shows that 
assuming that $\cS_\bA$ is generated by $\cS_\bA^+$ is not sufficient for
the compactness of $\cC_{\bA,\bB}$. \end{remark}

In order to approach the Interpolation Problem, it is natural to associate a linear map 
$\phi_{\bA,\bB}\colon \cS_\bA\ra M_k$ to the data $\bA$ and $\bB$ by letting
\begin{equation}\label{e:fia} \phi_{\bA,\bB}(A_\nu)=B_\nu,\quad \nu=1,\ldots,N,
\end{equation} and then uniquely extending it by linearity to the whole 
$*$-subspace $\cS_\bA$.
Then, having in mind the Smith-Ward linear functional \eqref{e:swdir}, let
\begin{equation}\label{e:es} s_{\bA,\bB}(E_{i,j}^{(k)}\otimes A_\nu)
=\langle B_\nu e_j^{(k)},e_i^{(k)}\rangle_{\CC^k}=b_{i,j,\nu},
\quad i,j=1,\ldots,N,\ \nu=1,\ldots,N,
\end{equation}
where
\begin{equation}\label{e:be} B_\nu=\sum_{i,j=1}^k b_{i,j,\nu} E_{i,j}^{(k)},
\quad \nu=1,\ldots,N.
\end{equation}
Since $\{E_{i,j}^{(k)}\otimes A_\nu\mid i,j=1,\ldots k,\ 
\nu=1,\ldots,N\}$ is a basis for $M_k\otimes \cS_\bA$, it follows that 
$s_{\bA,\bB}$ admits a unique extension to a linear functional $s_{\bA,\bB}$ 
on $M_k(\cS_\bA)$. Note that, with respect to 
the transformations \eqref{e:swdir} and 
\eqref{e:swinverse}, the functional $s_{\bA,\bB}$ corresponds to the map
$\phi_{\bA,\bB}$, and vice-versa.

To the linear functional $s_{\bA,\bB}$ one
also uniquely associates its density matrix $D_{\bA,\bB}$ as in \eqref{e:densityo}, 
more precisely,
\begin{equation}\label{e:dab} s_{\bA,\bB}(C)=\tr(D_{\bA,\bB}^* C),\quad C\in\cS_\bA,
\end{equation}
that can be explicitly calculated in terms of input-output data $\bA$ and $\bB$, 
as follows.

\begin{proposition}\label{p:dens} 
Let the data $A_1,\ldots,A_N$ and $B_1,\ldots,B_N$ 
satisfy the assumptions \emph{(a1)} and \emph{(a2)}. 
Then, the density matrix $D_{\bA,\bB}$ of the linear functional $s_{\bA,\bB}$ is
\begin{equation}\label{e:de} 
D_{\bA,\bB}=\sum_{\nu=1}^N \sum_{i,j=1}^k d_{i,j,\nu} E_{i,j}^{(k)}\otimes A_\nu,
\end{equation}
where, for each pair $i,j=1,\ldots,k$, the numbers $d_{i,j,1},\ldots,d_{i,j,N}$ are the 
unique solutions of the linear system
\begin{equation}\label{e:dem}
 \sum_{\mu=1}^N d_{i,j,\mu} \tr(A_\mu A_\nu)=\overline{b}_{i,j,\nu},
 \quad \nu=1,\ldots,N,
\end{equation} and the numbers $b_{i,j,\nu}$ are defined at \eqref{e:be}.
\end{proposition}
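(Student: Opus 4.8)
The plan is to exploit that $D_{\bA,\bB}$ is, by definition \eqref{e:dab}, the unique element of the $*$-subspace $M_k\otimes\cS_\bA\subseteq M_{kn}$ representing the functional $s_{\bA,\bB}$ through the Hilbert-Schmidt inner product. Since $\{A_1,\ldots,A_N\}$ is a basis of $\cS_\bA$ by (a2) and $\{E_{i,j}^{(k)}\mid i,j=1,\ldots,k\}$ is a basis of $M_k$, the family $\{E_{i,j}^{(k)}\otimes A_\nu\}$ is a basis of $M_k\otimes\cS_\bA$; hence $D_{\bA,\bB}$ necessarily has the form \eqref{e:de} for some scalars $d_{i,j,\nu}$, and the whole problem reduces to identifying these scalars.

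First I would impose the defining relation \eqref{e:dab} on the basis elements $E_{p,q}^{(k)}\otimes A_\nu$, using \eqref{e:es} for the left-hand side, so that the task becomes the evaluation of $\tr(D_{\bA,\bB}^*(E_{p,q}^{(k)}\otimes A_\nu))$. Substituting the expansion \eqref{e:de} and recalling that each $A_\mu$ is Hermitian by (a1), so that $D_{\bA,\bB}^*=\sum_{\mu}\sum_{i,j}\overline{d_{i,j,\mu}}\,E_{j,i}^{(k)}\otimes A_\mu$, I would compute the trace factorwise: using $E_{j,i}^{(k)}E_{p,q}^{(k)}=\delta_{i,p}E_{j,q}^{(k)}$, $\tr(E_{j,q}^{(k)})=\delta_{j,q}$, and $\tr(X\otimes Y)=\tr(X)\tr(Y)$, every term collapses except those with $i=p$ and $j=q$. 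This yields $\sum_{\mu}\overline{d_{p,q,\mu}}\tr(A_\mu A_\nu)=b_{p,q,\nu}$ for all $p,q,\nu$. To reach the stated system \eqref{e:dem} I would then take complex conjugates, observing that $\tr(A_\mu A_\nu)$ is real: since $A_\mu,A_\nu$ are Hermitian, $\overline{\tr(A_\mu A_\nu)}=\tr((A_\mu A_\nu)^*)=\tr(A_\nu A_\mu)=\tr(A_\mu A_\nu)$. Conjugating the previous identity gives exactly $\sum_{\mu}d_{p,q,\mu}\tr(A_\mu A_\nu)=\overline{b}_{p,q,\nu}$, which is \eqref{e:dem} with $i=p$, $j=q$.

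Finally, for existence and uniqueness of the $d_{i,j,\nu}$ I would note that the coefficient matrix $G=[\tr(A_\mu A_\nu)]_{\mu,\nu=1}^N$ is the Gram matrix of $\{A_1,\ldots,A_N\}$ with respect to the Hilbert-Schmidt inner product, since $\tr(A_\mu A_\nu)=\langle A_\mu,A_\nu\rangle_{\mathrm{HS}}$ for Hermitian matrices. By (a2) this family is linearly independent, so $G$ is positive definite, hence invertible, and for each fixed pair $(i,j)$ the system \eqref{e:dem} has a unique solution $d_{i,j,1},\ldots,d_{i,j,N}$. I do not expect a genuine obstacle here: the argument is a direct computation, and the only points demanding care are the placement of the complex conjugates and the verification that $G$ is invertible, which rests squarely on the linear independence hypothesis (a2).
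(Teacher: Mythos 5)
Your proposal is correct and follows essentially the same route as the paper's own proof: expand $D_{\bA,\bB}$ in the basis $\{E_{i,j}^{(k)}\otimes A_\nu\}$, evaluate $\tr(D_{\bA,\bB}^*(E_{i,j}^{(k)}\otimes A_\nu))$ via the matrix-unit multiplication and trace identities to obtain $\sum_\mu \overline{d}_{i,j,\mu}\tr(A_\mu A_\nu)=b_{i,j,\nu}$, and invoke the positive definiteness of the Gramian for unique solvability. Your explicit conjugation step (using that $\tr(A_\mu A_\nu)$ is real for Hermitian matrices) is in fact slightly more careful than the paper, which leaves that passage implicit.
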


\begin{proof} 
Clearly, the density matrix $D_{\bA,\bB}$ can be represented in terms of the 
basis 
$\{E^{(k)}_{i,j}\otimes A_\nu\mid i,j=1,\ldots,k,\ \nu=1,\ldots,N\}$ as in
\eqref{e:de}, so we only have to show that \eqref{e:dem} holds. To this end, note that
\begin{equation*} D_{\bA,\bB}^*=\sum_{i,j=1}^k \sum_{\nu=1}^N \overline{d}_{i,j,\nu} 
E^{(k)}_{j,i} \otimes  A_\nu,
\end{equation*} recalling that $A_\nu$ are Hermitian matrices, by assumption.
Then, in view of \eqref{e:es}, for each $i,j=1,\ldots,k$ and each $\nu=1,\ldots,N$, 
we have
\begin{align*} b_{i,j,\nu} & = s_{\bA,\bB}(E^{(k)}_{i,j}\otimes A_\nu)
=\tr(D_{\bA,\bB}^*(E^{(k)}_{i,j}\otimes A_\nu)) \\
& = \tr(\sum_{i',j'=1}^k\sum_{\mu=1}^N \overline{d}_{i',j',\mu}(E^{(k)}_{j',i'}E^{(k)}_{i,j}
\otimes A_\mu A_\nu)) \\
\intertext{then, taking into account that $E^{(k)}_{j',i'}E^{(k)}_{i,j}
=\delta_{i',i}E^{(k)}_{j',j}$, we have}
& = \sum_{\mu=1}^N\sum_{j'=1}^k \overline{d}_{i,j',\mu}\tr(E^{(k)}_{j',j})
\tr(A_\mu A_\nu)\\
\intertext{and, since $\tr(E^{(k)}_{j',j})=\delta_{j',j}$, we have}
& = \sum_{\mu=1}^N\overline{d}_{i,j,\mu}\tr(A_\mu A_\nu).
\end{align*}
Finally, observe that the matrix $[\tr(A_\mu A_\nu)]_{\mu,\nu=1}^N$ is the Gramian 
matrix of the linearly independent system $A_1,\ldots,A_N$ with respect to the 
Hilbert-Schmidt inner product, hence positive definite and, in particular, nonsingular. 
Therefore, the system \eqref{e:dem} has unique solution.
\end{proof}

\begin{lemma}\label{l:orta} Let $\cS$ be a $*$-subspace in $M_n$ and 
let $\cS^\perp$ be the orthogonal
complement space associated to $\cS$ with respect to the Hilbert-Schmidt inner 
product
\begin{equation}\label{e:ort} \cS^\perp=\{E\in M_n\mid \tr(C^*E)=0,\mbox{ for all }
C\in\cS\}.\end{equation}
Then:

\nr{a} $\cS^\perp$ is a $*$-subspace of $M_n$, 
hence linearly generated by its Hermitian matrices.

\nr{b} $(M_k\otimes \cS)^\perp=M_k\otimes \cS^\perp$, in particular, 
$(M_k\otimes\cS)^\perp$ is a $*$-subspace of $M_k\otimes M_n$.

\nr{c} If $\cS$ is an operator system then 
any matrix $C\in \cS^\perp$ has zero trace, in particular 
$\cS^\perp\cap M_n^+=\{0\}$.

\nr{d}  If $\cS$ is an operator system then any matrix in
$(M_k\otimes\cS)^\perp$ has zero trace, hence $(M_k\otimes\cS)^\perp$
does not contain nontrivial positive semidefinite matrices.
\end{lemma}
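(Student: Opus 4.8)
The plan is to prove the four statements essentially in order, since each later part builds on the earlier ones. For part (a), I would take an arbitrary $E\in\cS^\perp$ and show $E^*\in\cS^\perp$: for any $C\in\cS$, using that $\cS$ is a $*$-subspace so $C^*\in\cS$, I would compute $\tr(C^*E^*)=\overline{\tr(CE)}=\overline{\tr(E C)}=\overline{\tr(C^* E)^{\,\sim}}$ — more cleanly, $\tr(C^* E^*)=\overline{\tr(E\,C)}$ and since $C^{*}\in\cS$ the defining condition $\tr((C^*)^* E)=\tr(C E)=0$ gives $\tr(C^* E^*)=0$. Hence $E^*\in\cS^\perp$, so $\cS^\perp$ is a $*$-subspace, and the ``linearly generated by Hermitian matrices'' clause is then immediate from the general fact about $*$-subspaces recalled in Subsection~\ref{ss:stars}.

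For part (b), the inclusion $M_k\otimes\cS^\perp\subseteq (M_k\otimes\cS)^\perp$ is a direct computation: on simple tensors $X\otimes E$ with $X\in M_k$, $E\in\cS^\perp$ and $Y\otimes C$ with $C\in\cS$, the Hilbert-Schmidt inner product on $M_k\otimes M_n$ factors as $\tr((Y\otimes C)^*(X\otimes E))=\tr(Y^*X)\,\tr(C^*E)=0$, and I would extend by bilinearity. For the reverse inclusion I would use a dimension count: $\dim(M_k\otimes\cS)^\perp = k^2 n^2 - k^2\dim\cS = k^2(n^2-\dim\cS)=k^2\dim\cS^\perp=\dim(M_k\otimes\cS^\perp)$, so the inclusion is an equality. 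The ``in particular'' clause then follows from part (a) applied to show $M_k\otimes\cS^\perp$ is a $*$-subspace, or directly since the tensor of $*$-subspaces is stable under adjoints.

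For part (c), if $\cS$ is an operator system then $I_n\in\cS$, so for any $C\in\cS^\perp$ the defining relation with the choice $I_n\in\cS$ gives $\tr(C)=\tr(I_n^* C)=0$. Consequently, a positive semidefinite matrix in $\cS^\perp$ must have zero trace, and since a positive semidefinite matrix of trace zero is the zero matrix, $\cS^\perp\cap M_n^+=\{0\}$. Part (d) is then an application of (c) to the operator system $M_k\otimes\cS$ inside $M_k\otimes M_n\simeq M_{kn}$: I would first note that $M_k\otimes\cS$ is itself an operator system because it contains $I_k\otimes I_n=I_{kn}$, and then invoke (b) to identify its orthogonal complement as $M_k\otimes\cS^\perp=(M_k\otimes\cS)^\perp$; applying (c) at the level of $M_{kn}$ yields that every matrix in $(M_k\otimes\cS)^\perp$ has zero trace, so it contains no nontrivial positive semidefinite matrix.

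None of the steps presents a serious obstacle; the only point requiring a little care is the reverse inclusion in (b), where the cleanest route is the dimension count rather than an element-chase, together with keeping the Hilbert-Schmidt inner product's tensor factorisation straight. I would make sure to state explicitly that $M_k\otimes\cS$ is an operator system in the proof of (d), since (c) is phrased only for operator systems and that hypothesis must be verified before it is applied.
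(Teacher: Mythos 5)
Your proposal is correct and follows essentially the same route as the paper: the adjoint computation for (a), the tensor factorisation of the Hilbert--Schmidt inner product plus a dimension count for (b), the pairing with $I_n$ and faithfulness of the trace for (c), and the application of (b) and (c) to the operator system $M_k\otimes\cS$ for (d). The only blemish is the momentarily garbled chain of equalities at the start of your argument for (a), but the clean version you give immediately afterwards is exactly right.
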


\begin{proof} (a) Clearly, $\cS^\perp$ is a subspace of $M_n$. Let $E\in\cS^\perp$, 
hence $\tr(E^*C)=0$ for all $C\in\cS$. Then, $0=\overline{\tr(E^*C)}=\tr(C^*E)
=\tr(EC^*)=\tr((E^*)^*C^*)$ for all $C\in\cS$ and, since $\cS$ is stable under taking 
adjoints, this implies that $E^*\in\cS^\perp$.

(b) A moment of thought shows that 
$M_k\otimes\cS^\perp\subseteq (M_k\otimes\cS)^\perp$. On the other hand, 
$\dim((M_k\otimes\cS)^\perp)=k^2n^2-k^2\dim(\cS)=k^2(n^2-\dim(\cS))
=\dim(M_k\otimes\cS^\perp)$, hence the desired conclusion follows.

(c) This is a consequence of the fact that $I_n\in\cS$ and the fact that the trace 
is faithful.

(d) This is a consequence of the statements (b) and (c). 
\end{proof}

\begin{theorem}\label{t:gen} Let the data $A_1,\ldots,A_N\in M_n$ and 
$B_1,\ldots,B_N\in M_k$ be given and subject to the assumptions \emph{(a1)} 
and \emph{(a2)}, let $\phi_{\bA,\bB}$ be the linear map defined at \eqref{e:fia},
let $s_{\bA,\bB}$ be the linear functional defined at \eqref{e:es} and the density 
matrix $D_{\bA,\bB}$ associated to $s_{\bA,\bB}$ as in \eqref{e:densityo}. 
Also, let $\cS_\bA^\perp$ be the 
orthogonal complement space associated to $\cS_\bA$ with respect to the 
Hilbert-Schmidt inner product, see \eqref{e:ort}.

The following assertions are equivalent:

\nr{i} There exists $\phi\in\mathrm{CP}(M_n\, M_k)$ such that 
$\phi(A_\nu)=B_\nu$ for all $\nu=1,\ldots,N$.

\nr{ii} The affine space $D_{\bA,\bB}+M_k\otimes\cS_\bA^\perp$ contains at least 
one positive semidefinite matrix.
\end{theorem}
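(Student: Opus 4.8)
The plan is to establish the equivalence by using the bijective correspondence between completely positive maps and positive semidefinite matrices (via the Choi matrix) together with the relationship between the density matrix and the Smith-Ward functional. The key observation is that condition (i) asks for a completely positive extension $\phi$ of the fixed map $\phi_{\bA,\bB}$, and the space of candidate extensions is parametrised precisely by the affine space appearing in (ii).

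First I would translate complete positivity into a statement about density matrices. By Remark~\ref{r:density}, a linear functional $s$ on $M_n$ is positive if and only if its density matrix $D_s\in M_n^+$. Applied to the Smith-Ward functional $s_\phi$ on $M_k\otimes M_n\simeq M_{kn}$, and invoking Theorem~\ref{t:sw} which identifies complete positivity of $\phi$ with positivity of $s_\phi$, we obtain that $\phi\in\mathrm{CP}(M_n,M_k)$ if and only if the density matrix $D_{s_\phi}\in M_{kn}$ is positive semidefinite. Thus the existence of a solution $\phi$ to the Interpolation Problem is equivalent to the existence of a positive semidefinite density matrix $D\in M_{kn}$ whose associated Smith-Ward functional restricts on $M_k\otimes\cS_\bA$ to $s_{\bA,\bB}$.

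The central step is to identify which density matrices $D\in M_{kn}$ restrict to $s_{\bA,\bB}$ on $M_k\otimes\cS_\bA$. Since $s_\phi(C)=\tr(D^*C)$ for $C\in M_k\otimes M_n$, and since $D_{\bA,\bB}\in M_k\otimes\cS_\bA$ is defined by $s_{\bA,\bB}(C)=\tr(D_{\bA,\bB}^*C)$ for $C\in M_k\otimes\cS_\bA$, the condition that $s_\phi$ extends $s_{\bA,\bB}$ means exactly that $\tr((D-D_{\bA,\bB})^*C)=0$ for all $C\in M_k\otimes\cS_\bA$, i.e.\ $D-D_{\bA,\bB}\in(M_k\otimes\cS_\bA)^\perp$. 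By Lemma~\ref{l:orta}(b) this orthogonal complement equals $M_k\otimes\cS_\bA^\perp$. Hence the set of density matrices that extend $s_{\bA,\bB}$ is precisely the affine space $D_{\bA,\bB}+M_k\otimes\cS_\bA^\perp$, and a solution $\phi$ exists if and only if this affine space meets $M_{kn}^+$, which is assertion (ii).

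The main subtlety I expect is the careful bookkeeping needed to pass between the three levels of identification: the map $\phi$, its Smith-Ward functional $s_\phi$, and the density matrix $D$, while keeping track of which lives on $M_k\otimes M_n$ versus the subspace $M_k\otimes\cS_\bA$. In particular one must verify that the density matrix $D_{\bA,\bB}\in\cS_\bA$ computed as in Proposition~\ref{p:dens} is genuinely the Hilbert-Schmidt-orthogonal projection onto $M_k\otimes\cS_\bA$ of any full density matrix $D$ whose functional extends $s_{\bA,\bB}$, so that the decomposition $D=D_{\bA,\bB}+(D-D_{\bA,\bB})$ is exactly the orthogonal decomposition along $(M_k\otimes\cS_\bA)\oplus(M_k\otimes\cS_\bA)^\perp$. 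Once this orthogonal-projection interpretation is established, the equivalence follows immediately and, as a by-product, yields the promised parametrisation of all solutions by the convex set $\cP_{\bA,\bB}=\{P\in(M_k\otimes\cS_\bA^\perp)^{\mathrm h}\mid P\geq -D_{\bA,\bB}\}$ via the affine map $P\mapsto D_{\bA,\bB}+P$.
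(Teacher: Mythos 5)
Your proposal is correct and follows essentially the same route as the paper's own proof: pass from $\phi$ to the Smith--Ward functional $s_\phi$, use Remark~\ref{r:density} to identify positivity of $s_\phi$ with positive semidefiniteness of its density matrix in $M_{kn}$, and observe via Lemma~\ref{l:orta} that the density matrices of extensions of $s_{\bA,\bB}$ are exactly the elements of $D_{\bA,\bB}+M_k\otimes\cS_\bA^\perp$. Your explicit remark that $D_{\bA,\bB}$ is the Hilbert--Schmidt orthogonal projection of any extending density matrix onto $M_k\otimes\cS_\bA$ is a correct and slightly more detailed account of what the paper uses implicitly.
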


\begin{proof} (i)$\Ra$(ii). Let $\phi\in\mathrm{CP}(M_n\, M_k)$ be such that 
$\phi(A_\nu)=B_\nu$ for all $\nu=1,\ldots,N$, hence $\phi$ extends the linear map 
$\phi_{\bA,\bB}$, and let 
$s_\phi\colon M_k(M_n)\ra\CC$ be the Smith-Ward linear functional associated to 
$\phi$ as in \eqref{e:swdir}. Since $\phi$ is completely positive, it follows that 
$s_\phi$ is positive. Further, let $D_\phi\in M_{kn}$ be the density matrix of 
$s_\phi$, cf.\ \eqref{e:density}, hence, by Remark~\ref{r:density}, $D_\phi$ is 
positive semidefinite. On the other hand, since $\phi$ extends $\phi_{\bA,\bB}$, 
it follows that $s_\phi$ extends $s_{\bA,\bB}$, hence $D_\phi=D_{\bA,\bB}+P$ 
for some $P\in (M_k\otimes \cS_\bA)^\perp=M_k\otimes \cS_\bA^\perp$.

(ii)$\Ra$(i). Let $D=D_{\bA,\bB}+P$ be positive semidefinite, for some 
$P\in (M_k\otimes \cS_\bA)^\perp=M_k\otimes \cS_\bA^\perp$. Then
\begin{equation*} \tr(D^*C)=\tr((D_{\bA,\bB}^*+P^*)C)=\tr(D_{\bA,\bB}^*C)
=s_{\bA,\bB}(C),\quad C\in \cS_\bA,
\end{equation*} hence, letting $s\colon M_{kn}\ra\CC$ be the linear functional 
associated to the density matrix $D$, it follows that $s$ is positive and extends 
$s_{\bA,\bB}$. Further, let $\phi_s\colon M_n\ra M_k$ be the linear map associated
to $s$ as in \eqref{e:swinv}. Then $\phi$ is completely positive and extends 
$\phi_{\bA,\bB}$.
\end{proof}

\begin{corollary}\label{c:text}
Under the assumptions and the notation 
of Theorem~\ref{t:gen}, suppose that one (hence both) 
of the equivalent conditions \emph{(i)} and \emph{(ii)} holds. 
Then, the formula
\begin{equation} \label{e:densa} \phi(A)=\bigl[ \tr(D_{\bA,\bB}+P)
(E_{i,j}^{(k)}\otimes A)\bigr]_{i,j=1}^k,\quad A\in M_n,
\end{equation}
establishes an affine isomorphism between the closed convex sets
\begin{equation}\label{e:cab} 
\cC_{\bA,\bB}:=\{\phi\in\mathrm{CP}(M_n,M_k)\mid \phi(A_\nu)=B_\nu,
\mbox{ for all }\nu=1,\ldots,N\},
\end{equation}
and
\begin{equation}\label{e:pab} 
\cP_{\bA,\bB}:=\{P\in (M_k\otimes \cS_\bA^\perp)^\mathrm{h}
\mid  P\geq -D_{\bA,\bB}\}.
\end{equation}
\end{corollary}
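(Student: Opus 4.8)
The plan is to realise the asserted correspondence as the composition of the two canonical bijections already available in Section~\ref{s:npr}: the conjugate-linear identification of functionals with density matrices in \eqref{e:densityo}, and the Smith-Ward isomorphism \eqref{e:swinverse}, \eqref{e:swinv}. Concretely, given $P\in\cP_{\bA,\bB}$ I set $D=D_{\bA,\bB}+P$. Since $P$ is Hermitian and $D_{\bA,\bB}$ is Hermitian (by Remark~\ref{r:hermitian}.(1), as $s_{\bA,\bB}$ is a Hermitian functional because all $B_\nu$ are Hermitian), $D$ is Hermitian, and the constraint $P\geq -D_{\bA,\bB}$ says exactly that $D\geq 0$. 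The functional $s_D(C)=\tr(D^*C)=\tr(DC)$ on $M_{kn}$ is then positive by Remark~\ref{r:density}, and I define $\phi_P=\phi_{s_D}$ through \eqref{e:swinv}. Because $D^*=D$, this is precisely the formula \eqref{e:densa}, and Theorem~\ref{t:sw}, applied to the operator system $M_n$, guarantees $\phi_P\in\mathrm{CP}(M_n,M_k)$.

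Next I would verify that $\phi_P$ interpolates the data, that is $\phi_P\in\cC_{\bA,\bB}$. The only point is that $P\in M_k\otimes\cS_\bA^\perp=(M_k\otimes\cS_\bA)^\perp$ by Lemma~\ref{l:orta}.(b), so $\tr(P^*(E_{i,j}^{(k)}\otimes A_\nu))=0$ for all $i,j,\nu$; consequently $s_D$ agrees with $s_{\bA,\bB}$ on $M_k\otimes\cS_\bA$, whence $\phi_P$ extends $\phi_{\bA,\bB}$ and $\phi_P(A_\nu)=B_\nu$. This is exactly the computation already carried out in the implication (ii)$\Ra$(i) of Theorem~\ref{t:gen}, so it can be quoted rather than repeated.

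For the reverse direction, to each $\phi\in\cC_{\bA,\bB}$ I associate $P_\phi=D_\phi-D_{\bA,\bB}$, where $D_\phi$ is the density matrix of the (positive) Smith-Ward functional $s_\phi$. Then $D_\phi\geq 0$ by Remark~\ref{r:density} and $D_\phi$ is Hermitian, so $P_\phi$ is Hermitian; and since $\phi$ extends $\phi_{\bA,\bB}$ the functional $s_\phi$ extends $s_{\bA,\bB}$, which forces $\tr((D_\phi-D_{\bA,\bB})^*C)=0$ for every $C\in M_k\otimes\cS_\bA$, i.e.\ $P_\phi\in(M_k\otimes\cS_\bA)^\perp=M_k\otimes\cS_\bA^\perp$. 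Finally $D_\phi=D_{\bA,\bB}+P_\phi\geq 0$ reads $P_\phi\geq -D_{\bA,\bB}$, so $P_\phi\in\cP_{\bA,\bB}$. The maps $P\mapsto\phi_P$ and $\phi\mapsto P_\phi$ are mutually inverse precisely because both the functional-to-density correspondence and the Smith-Ward correspondence are bijective, which settles bijectivity.

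It remains to record affinity and the topological shape. The assignment $D\mapsto\phi_{s_D}$ furnished by \eqref{e:densa} is linear in $D$, and $P\mapsto D_{\bA,\bB}+P$ is affine, so $P\mapsto\phi_P$ is a real-affine map, as is its inverse; hence it is an affine isomorphism. The set $\cC_{\bA,\bB}$ is closed and convex by Remark~\ref{r:compact}, while $\cP_{\bA,\bB}$ is closed and convex as the intersection of the real subspace $(M_k\otimes\cS_\bA^\perp)^\mathrm{h}$ with the closed convex condition $D_{\bA,\bB}+P\geq 0$. I expect the only genuinely delicate point—rather than a true obstacle—to be the Hermitian bookkeeping: one must keep track that $D_{\bA,\bB}$, $P$, and $D_\phi$ are all Hermitian, so that $D^*=D$ converts \eqref{e:swinv} into the stated formula \eqref{e:densa} and so that the order relation $P\geq -D_{\bA,\bB}$ matches positive semidefiniteness of $D$; everything else is a direct transcription of the two bijections together with the argument already given for Theorem~\ref{t:gen}.
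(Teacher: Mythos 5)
Your proposal is correct and follows essentially the same route as the paper: the paper's own (much terser) proof likewise obtains the correspondence by composing the density-matrix identification \eqref{e:densityo} with the Smith--Ward bijection \eqref{e:swinv} and appealing to the two directions of the proof of Theorem~\ref{t:gen}; you have merely written out the Hermitian bookkeeping and the mutual-inverse check that the paper leaves implicit.
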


\begin{proof} It is clear that both sets $\cC_{\bA,\bB}$ and $\cP_{\bA,\bB}$
are closed and convex.

The fact that the formula \eqref{e:densa} establishes an
affine isomorphism between these two convex sets follows, on the one hand, 
from the affine isomorphism properties of the Smith-Ward functional and of the 
density matrix and, on the other hand, from the proof of Theorem~\ref{t:gen}.
\end{proof}

\begin{remarks}\label{r:texta}
Let the assumptions and the notation of Theorem~\ref{t:gen} hold.

\nr{1} In order for the set $\cC_{\bA,\bB}$ to be nonempty, a necessary condition is, 
clearly, that for arbitrary $\nu=1,\ldots,N$, if $A_\nu$ is 
semidefinite, then $B_\nu$ is semidefinite of the same type.

\nr{2} If the density matrix $D_{\bA,\bB}$ is positive semidefinite, as a consequence 
of Corollary~\ref{c:text}, the set 
$\cC_{\bA,\bB}$ is nonempty, more precisely, the map $\phi\colon M_n\ra M_k$ 
defined by
\begin{equation}\label{e:fidab} 
\phi(A)=\bigl[\tr(D_{\bA,\bB}(E_{i,j}^{(k)}\otimes A))\bigr]_{i,j=1}^n,
\quad A\in M_n,
\end{equation} is completely positive and $\phi(A_\nu)=B_\nu$ for all 
$\nu=1,\ldots,N$. We stress the fact that this sufficient condition is, in general, not 
necessary, see Examples~\ref{ex:pp}.

\nr{3} According to Corollary 3.2 in \cite{LiPoon},
for any $A\in M_n^+$ and $B\in M_k^+$ there exists 
$\phi\in \mathrm{CP}(M_n,M_k)$ such that $\phi(A)=B$.
This can be obtained, in our setting, by observing that, in this case, 
$D_{A,B}=B^T\otimes A$ is positive semidefinite and then apply the previous 
statement.
\end{remarks}

The following theorem considers the special case when the $*$-space $\cS_\bA$
is generated by its positive cone $\cS_\bA^+$. This assumption, for example, 
becomes natural if we are looking for solutions $\phi$ 
of the Interpolation Problem that are unital, that is, $\phi(I_n)=I_k$, or if we assume
that the data $\bA$ and $\cB$ consist of quantum states.
The equivalence of assertions (1)
and (2), which is based on Arveson's Hahn-Banach Theorem, has been also 
observed in 
a different setting but equivalent formulation by A.~Jen\v cov\'a, cf.\ Theorem 1 in 
\cite{Jencova}, and by T.~Heinosaari, M.A.~Jivulescu, D.~Reeb, M.M.~Wolf, 
cf.\ Theorem 4 and Corollary 2 in \cite{HJRW} 
(our Corollary~\ref{c:pd} and Corollary~2 in \cite{HJRW} explains that the
two cited theorems are actually equivalent).

\begin{theorem}\label{t:opsys} With the assumptions and the notation as in 
Theorem~\ref{t:gen} assume, in addition, that \emph{(a3)} holds as well.
The following assertions are equivalent:

\nr{1} There exists $\phi\in\mathrm{CP}(M_n\, M_k)$ such that 
$\phi(A_\nu)=B_\nu$ for all $\nu=1,\ldots,N$.

\nr{2} The linear map $\phi_{\bA,\bB}$ defined at \eqref{e:fia} is $k$-positive.

\nr{3} The linear functional 
$s_{\bA,\bB}\colon M_k\otimes \cS_\bA\ra\CC$ defined by \eqref{e:es} 
is positive.

\nr{4} The affine space $D_{\bA,\bB}+M_k\otimes\cS_\bA^\perp$ contains at least 
one positive semidefinite matrix.
\end{theorem}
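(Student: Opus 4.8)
The plan is to prove Theorem~\ref{t:opsys} by establishing the chain of implications
$(1)\Ra(2)\Ra(3)\Ra(4)\Ra(1)$, leveraging the general equivalence already
secured in Theorem~\ref{t:gen} together with the Smith-Ward machinery of
Theorem~\ref{t:sw}, which becomes available precisely because the extra hypothesis
(a3) guarantees that $\cS_\bA$ is linearly generated by $\cS_\bA^+$. The key
observation is that $(1)\Leftrightarrow(4)$ is \emph{already} Theorem~\ref{t:gen},
and requires no assumption beyond (a1) and (a2); so the real content of this theorem
is to weave the two intermediate assertions $(2)$ and $(3)$ into that loop, and the
only tool genuinely needing (a3) is Theorem~\ref{t:sw}.

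First I would dispose of $(1)\Ra(2)$: if $\phi\in\mathrm{CP}(M_n,M_k)$ interpolates
the data, then $\phi$ extends $\phi_{\bA,\bB}$, and since a completely positive map on
$M_n$ is in particular $k$-positive, its restriction $\phi_{\bA,\bB}$ to $\cS_\bA$ is
$k$-positive as well (positivity of $I_k\otimes\phi$ on $M_k\otimes M_n$ restricts to
positivity of $I_k\otimes\phi_{\bA,\bB}$ on the subspace $M_k\otimes\cS_\bA$). Next,
$(2)\Ra(3)$ is immediate from the very definition of the Smith-Ward functional in
\eqref{e:swdir}: recalling that $s_{\bA,\bB}$ corresponds to $\phi_{\bA,\bB}$ under the
isomorphism \eqref{e:swdirect}, the middle line of \eqref{e:swdir} exhibits
$s_{\bA,\bB}([A_{i,j}])=\langle(I_k\otimes\phi_{\bA,\bB})([A_{i,j}])e^{(k)},e^{(k)}\rangle$,
which is nonnegative whenever $[A_{i,j}]\in M_k(\cS_\bA)$ is positive semidefinite,
because $k$-positivity of $\phi_{\bA,\bB}$ makes the inner argument positive
semidefinite. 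This is exactly the implication $(b)\Ra(c)$ of Theorem~\ref{t:sw}
specialised to our functional.

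The step $(3)\Ra(4)$ is where the hypothesis (a3) and Theorem~\ref{t:sw} do the
work. Assuming $s_{\bA,\bB}$ is positive, Theorem~\ref{t:sw} (the nontrivial
implication $(c)\Ra(d)$, which rests on Kantorovich's extension Lemma~\ref{l:kant}
and hence needs that $\cS_\bA$ be generated by $\cS_\bA^+$) yields a completely
positive $\widetilde\phi\in\mathrm{CP}(M_n,M_k)$ extending $\phi_{\bA,\bB}$; its
Smith-Ward functional $s_{\widetilde\phi}$ on $M_k(M_n)$ is then a positive
extension of $s_{\bA,\bB}$, so by Remark~\ref{r:density} its density matrix
$D_{\widetilde\phi}$ is positive semidefinite. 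Because $s_{\widetilde\phi}$ extends
$s_{\bA,\bB}$, the two density matrices agree against all test matrices in $\cS_\bA$,
whence $D_{\widetilde\phi}-D_{\bA,\bB}\in(M_k\otimes\cS_\bA)^\perp
=M_k\otimes\cS_\bA^\perp$ (using Lemma~\ref{l:orta}(b)), so that
$D_{\widetilde\phi}\in D_{\bA,\bB}+M_k\otimes\cS_\bA^\perp$ is the desired positive
semidefinite element, giving $(4)$. Finally, $(4)\Ra(1)$ is contained verbatim in
the proof of Theorem~\ref{t:gen} and may simply be invoked. The main obstacle, and
the only place real analytic content enters, is this $(3)\Ra(4)$ step, since it is the
one that genuinely consumes assumption (a3) through the Hahn-Banach/Kantorovich
extension; every other implication is either definitional or a direct citation of an
already-proven result.
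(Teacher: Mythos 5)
Your proposal is correct and follows essentially the same route as the paper: (1)$\Ra$(2) by restriction, (2)$\Ra$(3) via the Smith-Ward formula, the Arveson/Kantorovich extension (Theorem~\ref{t:sw}, (c)$\Ra$(d)) where (a3) is consumed, and Theorem~\ref{t:gen} for the density-matrix assertion. The only (cosmetic) difference is that you close the cycle as $(3)\Ra(4)\Ra(1)$, whereas the paper proves $(3)\Ra(1)$ directly and then cites Theorem~\ref{t:gen} for $(1)\Leftrightarrow(4)$; your $(3)\Ra(4)$ step is just the composition of those two.
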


\begin{proof} (1)$\Ra$(2). 
Let $\phi\colon M_n\ra M_k$ be a completely positive map such that 
$\phi(A_\nu)=B_\nu$ for all $\nu=1,\ldots,N$. Then 
$\phi|\cS_\bA\colon \cS_\bA\ra M_k$
is completely positive, in the sense specified at the beginning of 
Subsection~\ref{ss:sw}, that is, $\phi_{\bA,\bB}=\phi|\cS_A$ is completely positive, 
in particular $k$-positive.

(2)$\Ra$(3). Assume that $\phi_{\bA,\bB}$ is $k$-positive. 
With notation as in \eqref{e:e}, a moment of 
thought shows that, for each $i,j=1,\ldots,N$ and each $\nu=1,\ldots,N$, we have
\begin{equation*}\langle (I_k\otimes\phi_{\bA,\bB})(E_{i,j}^{(k)}\otimes A_\nu)e^{(k)},
e^{(k)}\rangle_{\CC^{k^2}}=\phi_{\bA,\bB}(A_\nu)=B_\nu=
s_{\bA,\bB}(E_{i,j}^{(k)}\otimes A_\nu),
\end{equation*} hence
\begin{equation}\label{e:sifi} \langle (I_k\otimes\phi_{\bA,\bB})(C)e^{(k)},e^{(k)}
\rangle_{\CC^{k^2}}=s_{\bA,\bB}(C),\quad C\in M_k\otimes \cS_\bA,
\end{equation} and, consequently, $s_{\bA,\bB}$ maps any positive semidefinite 
matrix from $M_k\otimes \cS_\bA$ to a nonnegative number.

(3)$\Ra$(1). Assume that the linear functional $s_{\bA,\bB}\colon 
M_k\otimes \cS_\bA\ra\CC$ defined by \eqref{e:es} is positive, in the sense that 
it maps any positive element in $M_k\otimes \cS_\bA=M_k(\cS_\bA)$ into $\RR_+$.
By Arveson's Hahn-Banach Theorem \cite{Arveson}, see the implication 
(c)$\Ra$(d) in Theorem~\ref{t:sw} and the argument provided there, there exists
a completely positive map $\widetilde\phi\colon M_k\ra M_n$ extending 
$\phi_{\bA,\bB}$, hence $\widetilde\phi$ satisfies the same interpolation 
constraints as $\phi_{\bA,\bB}$.

(1)$\Leftrightarrow$(4). Proven in Theorem~\ref{t:gen} 
\end{proof}

\begin{remark}\label{r:param} Under the assumptions and notation as in 
Theorem~\ref{t:opsys}, if $\cS_\bA$ contains a positive definite matrix, then 
the set $\cC_{\bA,\bB}$ is convex and compact, 
see Remark~\ref{r:compact}. Then the set $\cP_{\bA,\bB}$, 
see Corollary~\ref{c:text}, is convex and compact as well.
\end{remark}

\begin{corollary}\label{c:ns} 
If the $*$-subspace $\cS_\bA$ is an algebra, then the set $\cC_{\bA,\bB}$ is
nonempty if and only if $D_{\bA,\bB}$ is positive semidefinite, more precisely,
in this case \eqref{e:fidab} provides a solution $\phi\in\cC_{\bA,\bB}$ of
the Interpolation Problem.
\end{corollary}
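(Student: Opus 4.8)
The plan is to prove the two implications separately, observing that the substantive content lies entirely in the forward direction while the converse is essentially a direct application of an earlier remark. For the direction asserting that positive semidefiniteness of $D_{\bA,\bB}$ suffices, I would simply invoke Remark~\ref{r:texta}.(2): once $D_{\bA,\bB}\geq 0$, the map defined by \eqref{e:fidab} is completely positive and satisfies the interpolation constraints, so $\cC_{\bA,\bB}\neq\emptyset$ and, moreover, \eqref{e:fidab} exhibits the advertised explicit solution. This direction uses no hypothesis on $\cS_\bA$ beyond \emph{(a1)} and \emph{(a2)}, so it is the algebra assumption that must do all the work in the converse.

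For the converse, suppose $\cC_{\bA,\bB}\neq\emptyset$ and fix some $\phi\in\cC_{\bA,\bB}$. The strategy is to transport the complete positivity of $\phi$ down to the level of the density matrix. First I would pass to the Smith-Ward functional $s_\phi$ on $M_k\otimes M_n$; since $\phi$ is completely positive, $s_\phi$ is a positive functional by the implication (a)$\Ra$(c) of Theorem~\ref{t:sw} applied with $\cS=M_n$. Because $\phi$ extends $\phi_{\bA,\bB}$, its functional $s_\phi$ restricts on $M_k\otimes\cS_\bA$ to exactly $s_{\bA,\bB}$, and the restriction of a positive functional to a $*$-subspace is again positive; hence $s_{\bA,\bB}$ is positive. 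The crucial structural observation is that, when $\cS_\bA$ is a $*$-subalgebra of $M_n$, the space $M_k\otimes\cS_\bA$ is a $C^*$-subalgebra of $M_{kn}=M_k\otimes M_n$, being $*$-closed and closed under multiplication, hence a norm-closed $*$-subalgebra of a finite-dimensional $C^*$-algebra. I would then apply Remark~\ref{r:hermitian}.(2) to this $C^*$-algebra and the positive functional $s_{\bA,\bB}$ to conclude that its density matrix $D_{\bA,\bB}$ is positive semidefinite. Combined with the first direction, this yields the stated equivalence, and \eqref{e:fidab} then provides the explicit solution.

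The step I expect to require the most care is the bookkeeping that identifies $D_{\bA,\bB}$, as constructed in Proposition~\ref{p:dens} and \eqref{e:dab}, with the density matrix of $s_{\bA,\bB}$ \emph{viewed as a functional on the $C^*$-algebra $M_k\otimes\cS_\bA$} in the sense of Remark~\ref{r:hermitian}.(2). This is legitimate because, by Proposition~\ref{p:dens}, we have $D_{\bA,\bB}\in M_k\otimes\cS_\bA$ together with $s_{\bA,\bB}(C)=\tr(D_{\bA,\bB}^*C)$ for all $C\in M_k\otimes\cS_\bA$, which is exactly the Hilbert--Schmidt representation used in that remark, so the two notions of density matrix coincide. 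A minor point worth recording is that a $*$-subalgebra of $M_n$ automatically satisfies \emph{(a3)}: writing $e$ for its (possibly non-identity) unit, every Hermitian $B\in\cS_\bA$ decomposes as $\frac{1}{2}(\|B\|e+B)-\frac{1}{2}(\|B\|e-B)$ into positive semidefinite elements of $\cS_\bA$, so the algebra case also sits inside the framework of Theorem~\ref{t:opsys}; nevertheless the argument above rests only on Theorem~\ref{t:sw}, the explicit form of $D_{\bA,\bB}$, and the two remarks.
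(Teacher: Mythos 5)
Your proof is correct and follows essentially the same route as the paper, whose proof is a one-line citation of Theorem~\ref{t:opsys}, Remark~\ref{r:hermitian}.(2), and Remark~\ref{r:texta}.(2): sufficiency of $D_{\bA,\bB}\geq 0$ via the explicit map \eqref{e:fidab}, and necessity by observing that $M_k\otimes\cS_\bA$ is a $C^*$-subalgebra so that the positive functional $s_{\bA,\bB}$ has a positive semidefinite density matrix. Your only deviation is to obtain positivity of $s_{\bA,\bB}$ by restricting $s_\phi$ from $M_k\otimes M_n$ rather than quoting the implication (1)$\Ra$(3) of Theorem~\ref{t:opsys}, which is just an unfolding of the same argument, and your remarks verifying (a3) and the identification of the two density matrices are accurate.
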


\begin{proof} This is a consequence of Theorem~\ref{t:opsys}, the second 
statement  of Remark~\ref{r:hermitian}, and the second statement of
Remark~\ref{r:texta}.
\end{proof}

Example~\ref{ex:pp}.(2) shows that the statement in the previous corollary may
be true without the assumption that the $*$-space $\cS_\bA$ is an algebra.

\begin{remark}\emph{Trace Preserving.}\label{r:ti}
Recall that a linear map $\phi\colon M_n\ra M_k$ is \emph{trace preserving} if 
$\tr(\phi(A))=\tr(A)$ for all $A\in M_n$. With the notation as in Theorem~\ref{t:gen}, 
let
\begin{equation}\label{e:qab} \cQ_{\bA,\bB}:=\{\phi\in \cC_{\bA,\bB}\mid\phi\mbox { is 
trace preserving}\},
\end{equation}
and we want to determine, with respect to the affine isomorphism established 
in Corollary~\ref{c:text}, how the corresponding parameterising subset 
$\cP_{\bA,\bB}$ can be singled out and, implicitly, to get a characterisation of the 
solvability of the  
Interpolation Problem for quantum channels. So let $P=[p_{(i,l),(j,m)}]$ be an 
arbitrary matrix in $\cP_{\bA,\bB}$, where $(i,l)=(i-1)n+l$ and $(j,m)=(j-1)n+m$, 
for $i,j=1,\ldots,k$ and $l,m=1,\ldots,n$, equivalently, in tensor notation,
\begin{equation*} P=\sum_{i,j=1}^k\sum_{l,m=1}^n p_{(i,l),(j,m)} E_{i,j}^{(k)}
\otimes E_{(l,m)}^{(n)}.
\end{equation*}
In view of \eqref{e:densa}, a map $\phi\in\cC_{\bA,\bB}$ is trace invariant if and only 
if
\begin{equation}\label{e:tinv} \tr(\phi(E_{l,m}^{(n)}))=\tr(E_{l,m}^{(n)})
=\delta_{l,m},\quad l,m=1,\ldots,n,
\end{equation} which, taking into account of Proposition~\ref{p:dens}, is equivalent 
with the conjunction of the following affine constraints
\begin{equation}\label{e:tinva} \sum_{i=1}^k p_{(i,m),(i,l)}
=\delta_{l,m}-\sum_{\nu=1}^N a_{m,l,\nu}\bigl(\sum_{i=1}^k d_{i,i,\nu}\bigr),
\quad l,m=1,\ldots,n.
\end{equation}
\end{remark}

\begin{remark}\label{r:zerotrace} Assume that $\cS_\bA$ is an operator system. 
By Theorem~\ref{t:gen}, if the Interpolation Problem has a solution then
there exists a positive semidefinite matrix $\widetilde D$ in 
$D_{\bA,\bB}+M_k\otimes \cS_\bA^\perp$ hence,
by Lemma~\ref{l:orta} we have $0\leq \tr(\widetilde D)
=\tr(D_{\bA,\bB})$. Therefore, under these assumptions, 
a necessary condition of solvability of the Interpolation Problem is
$\tr(D_{\bA,\bB})\geq 0$.
\end{remark}

\subsection{Orthonormalisation of the Input Data.}\label{ss:oid}
Theorem~\ref{t:gen}
gives the necessary and sufficient
condition of solvability of the Interpolation Problem in terms of the density 
matrix $D_{\bA,\bB}$ but, in order to precisely get it one might solve the system 
of linear equations \eqref{e:dem}, with the Gramian matrix
$[\tr(A_\mu^* A_\nu)]_{\mu,\nu}$ as the principal matrix of the system. If
the matrices 
$A_1,\ldots,A_N$ are mutually orthogonal with respect to the Hilbert-Schmidt 
inner product, this Gramian matrix is just the identity matrix $I_N$.
Observe that, if this is not the case, the Gram-Schmidt 
orthonormalisation algorithm yields an orthonormal system of matrices that 
preserves the assumptions (a1)--(a3), due to the fact that the trace of a product of 
two Hermitian matrices is always real. More precisely, if $A_1',\ldots,A_N'$ is the 
Gram-Schmidt orthogonalisation of the sequence of linearly independent Hermitian 
matrices $A_1,\ldots,A_N$ then
\begin{equation*} A_1'=\frac{1}{\sqrt{\tr(A_1^2)}} A_1,\end{equation*}
\begin{equation*}U_{\nu+1}=A_{\nu+1}-\sum_{\mu=1}^\nu \tr(A'_\mu A_\mu)A_\mu,
\quad A_{\nu+1}'=\frac{1}{\sqrt{\tr(U_{\nu+1}^2)}}U_{\nu+1},\quad \nu=1,\ldots,N-1.
\end{equation*}
Then, we can change, accordingly, the sequence $B_1,\ldots,B_N$ 
to $B'_1,\ldots,B'_N$
\begin{equation*} B_1'=\frac{1}{\sqrt{\tr(A_1^2)}} B_1,\quad B_{\nu+1}'=
\frac{1}{\sqrt{\tr(U_{\nu+1}^2)}}\bigl(B_{\nu+1} -\sum_{\mu=1}^\nu 
\tr(A'_\mu A_\mu)B_\mu\big),\quad \nu=1,\ldots,N-1,
\end{equation*} and observe that a linear map $\phi\colon M_n\ra M_k$ satisfies
the constraints $\phi(A_\nu)=B_\nu$, $\nu=1,\ldots,N$, if and only if 
$\phi(A_\nu')=B_\nu'$, $\nu=1,\ldots,N$.
Therefore, without loss of generality, we can replace the assumption (a2) with 
the assumption
\begin{itemize}
\item[(a2')] \emph{The set of matrices $\{A_1,\ldots,A_N\}$ is orthonormal with
respect to the Hilbert-Schmidt inner product, that is, $\tr(A_\mu A_\nu)
=\delta_{\mu,\nu}$ for all $\mu,\nu=1,\ldots,N$.}
\end{itemize}

\begin{lemma}\label{l:pos} Under the assumptions \emph{(a1)} and \emph{(a2')}, 
the density matrix $D_{\bA,\bB}$ of the linear functional $s_{\bA,\bB}$ 
defined at \eqref{e:es} is
\begin{equation}\label{e:dedoi} 
D_{\bA,\bB}=\sum_{\nu=1}^N B_\nu^T \otimes A_\nu.
\end{equation}
\end{lemma}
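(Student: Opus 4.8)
The plan is to specialise Proposition~\ref{p:dens} to the orthonormal situation and then rewrite the resulting coefficients using the Hermitian hypothesis. First I would invoke Proposition~\ref{p:dens}, which already expresses $D_{\bA,\bB}$ in the basis $\{E_{i,j}^{(k)}\otimes A_\nu\}$ as in \eqref{e:de}, with coefficients $d_{i,j,\nu}$ determined by the linear system \eqref{e:dem} whose principal matrix is the Gramian $[\tr(A_\mu A_\nu)]_{\mu,\nu}$. Under assumption (a2') this Gramian equals the identity $I_N$, so the system \eqref{e:dem} decouples completely and yields at once $d_{i,j,\nu}=\overline{b}_{i,j,\nu}$ for all $i,j=1,\ldots,k$ and all $\nu=1,\ldots,N$.

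Substituting this into \eqref{e:de} gives
\begin{equation*}
D_{\bA,\bB}=\sum_{\nu=1}^N\Bigl(\sum_{i,j=1}^k \overline{b}_{i,j,\nu}\, E_{i,j}^{(k)}\Bigr)\otimes A_\nu,
\end{equation*}
so that it only remains to recognise the inner sum as $B_\nu^T$. This is the one point where the Hermitian hypothesis (a1) enters: since $B_\nu=B_\nu^*$, comparing $(i,j)$-entries gives $\overline{b_{j,i,\nu}}=b_{i,j,\nu}$, equivalently $\overline{b}_{i,j,\nu}=b_{j,i,\nu}$. Hence $\sum_{i,j} \overline{b}_{i,j,\nu}E_{i,j}^{(k)}=\sum_{i,j}b_{j,i,\nu}E_{i,j}^{(k)}$, and the latter is precisely the transpose of $B_\nu=\sum_{i,j}b_{i,j,\nu}E_{i,j}^{(k)}$ from \eqref{e:be}, since the $(i,j)$-entry of $B_\nu^T$ is the $(j,i)$-entry $b_{j,i,\nu}$ of $B_\nu$. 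Combining the two displays yields $D_{\bA,\bB}=\sum_{\nu=1}^N B_\nu^T\otimes A_\nu$, as claimed.

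I do not anticipate any genuine obstacle: the content is entirely a bookkeeping simplification of Proposition~\ref{p:dens} once the Gramian collapses to the identity. The only subtlety worth flagging is that the transpose $B_\nu^T$ (rather than $B_\nu$ or $\overline{B_\nu}$) is forced by the conjugate-linear nature of the density-matrix correspondence \eqref{e:densityo} together with the Hermitian symmetry of $B_\nu$; one should resist writing $B_\nu$ in place of $B_\nu^T$, even though the two matrices share the same entries up to complex conjugation.
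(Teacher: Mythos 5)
Your proof is correct and follows exactly the paper's own argument: specialise Proposition~\ref{p:dens}, observe that under (a2') the Gramian in \eqref{e:dem} is $I_N$ so $d_{i,j,\nu}=\overline{b}_{i,j,\nu}=b_{j,i,\nu}$ by Hermiticity of $B_\nu$, and recognise the inner sum as $B_\nu^T$. No differences worth noting.
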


\begin{proof} Under the assumption (a2'), the Gramian matrix of $A_1,\ldots,A_N$ 
is the identity matrix $I_N$ hence the system of linear equations \eqref{e:dem} 
is simply solvable as $d_{i,j,\nu}=\overline{b}_{i,j,\nu}=b_{j,i,\nu}$ 
for all $i,j=1,\ldots,k$ and all $\nu=1,\ldots,N$, where we have taken into account 
that $B_\nu$ are all Hermitian matrices. Thus, by \eqref{e:de}, we have
\begin{equation*}
D_{\bA,\bB}=\sum_{\nu=1}^N \sum_{i,j=1}^k b_{j,i,\nu} E_{i,j}^{(k)}\otimes A_\nu
=\sum_{\nu=1}^N B_\nu^T\otimes A_\nu.\qedhere
\end{equation*}
\end{proof}

Note that under the assumptions (a1) and (a2') we can always 
find an orthonormal basis $A_1,\ldots,A_N,A_{N+1},\ldots,A_{n^2}$ of $M_n$, with
respect to the Hilbert-Schmidt inner product, whose first elements are exactly the 
elements of the input data $\bA$ and such that all its matrices are Hermitian. 
Indeed, this basically follows from the fact that $\cS_\bA^\perp$ is a $*$-space, and 
the remark we made before on the Gram-Schmidt orthonormalisation of a sequence 
of linearly independent Hermitian matrices.

\begin{theorem}\label{t:ort}
Assume that the data $A_1,\ldots,A_N$ and $B_1,\ldots,B_N$ satisfy the 
assumptions \emph{(a1)} and \emph{(a2')}. Let $A_{N+1},\ldots,A_{n^2}$ be a
sequence of Hermitian matrices in $M_n$ such that $A_1,\ldots,A_{n^2}$ is an
orthonormal basis of $M_n$ with respect to the Hilbert-Schmidt inner product.
The following assertions are equivalent:

\nr{1} There exists $\phi\in\mathrm{CP}(M_n,M_k)$ such that 
$\phi(A_\nu)=B_\nu$ for all $\nu=1,\ldots,N$.

\nr{2} There exist numbers $p_{i,j,\nu}$, $i,j=1,\ldots,k$ and $\nu=N+1,\ldots,n^2$, 
such that $p_{j,i,\nu}=\overline{p}_{i,j,\nu}$ and
\begin{equation}\label{e:cenu} \sum_{\nu=1}^N B_\nu^T \otimes A_\nu
+\sum_{i,j=1}^k \sum_{\nu=N+1}^{n^2} p_{i,j,\nu}E_{i,j}^{(k)}\otimes A_\nu \geq 0.
\end{equation}
\end{theorem}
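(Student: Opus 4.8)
The plan is to read this off as a direct specialisation of Theorem~\ref{t:gen}, the only new input being the explicit form of the density matrix afforded by the orthonormalisation hypothesis (a2'). First I would invoke Theorem~\ref{t:gen}, which states that assertion (1) is equivalent to the affine space $D_{\bA,\bB}+M_k\otimes\cS_\bA^\perp$ containing at least one positive semidefinite matrix. Everything then reduces to describing this affine space concretely in terms of the given data, after which assertion (2) will be seen to be merely a coordinate restatement of this geometric condition.

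For the base point of the affine space, Lemma~\ref{l:pos} gives, under (a1) and (a2'), the clean expression $D_{\bA,\bB}=\sum_{\nu=1}^N B_\nu^T\otimes A_\nu$. For the direction subspace, I would use that $A_1,\ldots,A_{n^2}$ is an orthonormal basis of $M_n$ with respect to the Hilbert-Schmidt inner product, whence $\cS_\bA^\perp=\lin\{A_{N+1},\ldots,A_{n^2}\}$. Tensoring with $M_k$ and recalling that $\{E_{i,j}^{(k)}\}_{i,j=1}^k$ is a basis of $M_k$, every element $P\in M_k\otimes\cS_\bA^\perp$ is uniquely of the form $P=\sum_{i,j=1}^k\sum_{\nu=N+1}^{n^2} p_{i,j,\nu}E_{i,j}^{(k)}\otimes A_\nu$, so the arbitrary matrix $D_{\bA,\bB}+P$ in the affine space is exactly the left-hand side of \eqref{e:cenu}.

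The one point meriting a short verification is the Hermiticity constraint $p_{j,i,\nu}=\overline{p}_{i,j,\nu}$. Since any candidate $D_{\bA,\bB}+P$ must be positive semidefinite, hence Hermitian, and since $D_{\bA,\bB}$ is already Hermitian (each $B_\nu^T$ is Hermitian because $B_\nu$ is, and $A_\nu$ is Hermitian, cf.\ also Remark~\ref{r:hermitian}), the added term $P$ must itself be Hermitian. Computing $P^*=\sum_{i,j,\nu}\overline{p}_{i,j,\nu}E_{j,i}^{(k)}\otimes A_\nu$ via $(E_{i,j}^{(k)})^*=E_{j,i}^{(k)}$ and $A_\nu^*=A_\nu$, and relabelling $i\leftrightarrow j$, one sees that $P=P^*$ is equivalent to $p_{j,i,\nu}=\overline{p}_{i,j,\nu}$ for all indices. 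Assembling these observations, the condition of Theorem~\ref{t:gen}(ii), namely that $D_{\bA,\bB}+P\geq 0$ for some $P\in M_k\otimes\cS_\bA^\perp$, becomes precisely the existence of coefficients $p_{i,j,\nu}$ with $p_{j,i,\nu}=\overline{p}_{i,j,\nu}$ satisfying \eqref{e:cenu}, which is assertion (2). I do not anticipate any genuine obstacle: the substantive content is carried entirely by Theorem~\ref{t:gen} and Lemma~\ref{l:pos}, and the remaining work is the routine identification of $\cS_\bA^\perp$ with $\lin\{A_{N+1},\ldots,A_{n^2}\}$ together with the bookkeeping of the Hermiticity condition.
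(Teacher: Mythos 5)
Your proposal is correct and follows essentially the same route as the paper: the paper's proof likewise invokes Theorem~\ref{t:gen} together with Lemma~\ref{l:pos} (and Lemma~\ref{l:orta} for the identification $(M_k\otimes\cS_\bA)^\perp=M_k\otimes\cS_\bA^\perp$), and then parametrises the Hermitian elements $P=\sum_{i,j,\nu}p_{i,j,\nu}E_{i,j}^{(k)}\otimes A_\nu$ exactly as you do. Your explicit verification of the Hermiticity condition $p_{j,i,\nu}=\overline{p}_{i,j,\nu}$ is just a spelled-out version of what the paper leaves implicit.
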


\begin{proof} We use Theorem~\ref{t:gen}, by 
means of Lemma~\ref{l:pos} and Lemma~\ref{l:orta}, taking into account that in 
order to get a positive semidefinite matrix in the affine space
$D_{\bA,\bB}+M_k\otimes\cS_{\bA}^\perp$ we actually look for a Hermitian 
element $P\in M_k\otimes\cS_{\bA}^\perp$, more precisely
\begin{equation*}P=\sum_{i,j=1}^{k}\sum_{\nu=N+1}^{n^2} p_{i,j,\nu} E_{i,j}^{(k)}
\otimes A_\nu,
\end{equation*} such that $D_{\bA,\bB}+ P\geq 0$.
\end{proof}

\begin{remarks}\label{r:suf} 
Assume that the data $A_1,\ldots,A_N$ and $B_1,\ldots,B_N$ satisfy the 
assumptions (a1) and (a2'). 

\noindent (i) If the set $\cC_{\bA,\bB}$ is nonempty,  then, as a consequence 
of Lemma~\ref{l:pos} and Remark~\ref{r:texta}, the following conditions are 
necessary:

\begin{itemize}
\item[(a)] For arbitrary $\nu=1,\ldots,N$, if $A_\nu$ is semidefinite then $B_\nu$ is
semidefinite of the same type.
\item[(b)] If $\cS_\bA$ is an operator system, then $\sum_{\nu=1}^N \tr(B_\nu)\tr(A_\nu)\geq 0$.
\end{itemize}

\noindent (ii) On the other hand, from Lemma~\ref{l:pos} and 
Remark~\ref{r:texta}.(2), if
\begin{equation}\label{e:suf} \sum_{\nu=1}^N B_\nu^T\otimes A_\nu \geq 0,
\end{equation} then the linear map $\phi\colon M_n\ra M_k$ defined by
\begin{equation}\label{e:cp} \phi(C)=\bigl[\sum_{\nu=1}^N 
b_{i,j,\nu} \tr(A_\nu C)\bigr]_{i,j=1}^k,\quad C\in M_n,
\end{equation}
where $b_{i,j,\nu}$ are the entries of the matrix $B_\nu$, see \eqref{e:be}, 
is completely positive and satisfies the interpolation constraints $\phi(A_\nu)=B_\nu$ 
for all $\nu=1,\ldots,N$.
\end{remarks}

\subsection{A Single Interpolation Pair.}\label{ss:sip}
For fixed $n,k\in\NN$, consider completely positive maps 
$\phi\colon M_n\ra M_k$ in the minimal Kraus representation, that is, 
$\phi(A)=V^*AV$, for some $V\in M_{k,n}$ and all
$A\in M_n$. By Proposition~\ref{p:minimal}, 
this corresponds to the case when the rank of the Choi matrix $\Phi$ of $\phi$ 
is $1$. For given Hermitian matrices $A\in M_n$ and $B\in M_k$, we are interested 
to determine under which conditions on $A$ and $B$ there exists a completely 
positive maps $\phi$ in the minimal Kraus representation such that $\phi(A)=B$. 

If $A$ is a Hermitian $n\times n$ matrix, we consider the decomposition 
$A=|A|^{1/2} S_A |A|^{1/2}$, where $|A|=(A^*A)^{1/2}$ is its absolute value, 
while $S_A=\sgn(A)$ is a Hermitian partial isometry, where $\sgn$ is the usual sign 
function: $\sgn(t)=1$ for $t>0$, $\sgn(t)=-1$ for $t<0$, and $\sgn(0)=0$. Note that, 
with this notation, $A=S_A|A|$ is the polar decomposition of $A$. Let 
$\cH_A=\CC^n\ominus \ker(A)$ and, further, consider the decomposition $\cH_A=
\cH_A^+\oplus \cH_A^-$, where $\cH_A^\pm$ is the spectral subspace of $S_A$ 
(and of $A$, as well) corresponding, respectively, to the eigenvalue $\pm 1$. Then,
with respect to the decomposition
\begin{equation}\label{e:decoma} \CC^n=\cH_A^+\oplus \cH_A^-\oplus \ker(A),
\end{equation}
we have
\begin{equation}\label{e:rep} A=\left[\begin{array}{lll}
A_+ & 0 & 0 \\
0 & -A_- & 0 \\
0 & 0 & 0 
\end{array}\right],\quad
S_A=\left[\begin{array}{lll}
I_A^+ & 0 & 0 \\
0 & -I_A^- & 0 \\
0 & 0 & 0 
\end{array}\right],
\end{equation}
where $A_\pm$ act in $\cH_A^\pm$, respectively, are positive operators, and 
$I_A^\pm$ are the identity operators in $\cH_A^\pm$, respectively.

With this notation, we consider the \emph{signatures} 
$\kappa_\pm(A)=\dim(\cH_A^\pm)=\rank(A_\pm)$ and $\kappa_0(A)=\dim(\ker(A))$.
The triple $(\kappa_-(A),\kappa_0(A),\kappa_+(A))$ is called the \emph{inertia} of 
$A$. Note that $\kappa_\pm(A)$ is the number of positive/negative eigenvalues of
the matrix $A$, counted with their multiplicities, 
as well as the \emph{number of 
negative/positive squares} of the quadratic form 
$\CC^n\ni x\mapsto \langle Ax,x\rangle$. In this respect, the space $\CC^n$ has 
natural structure of \emph{indefinite inner product} with respect to 
\begin{equation} [x,y]_A=\langle Ax,y\rangle,\quad x,y\in\CC^n.
\end{equation}
Then, $\kappa_\pm(A)$ coincides with the dimension of any $A$-maximal 
positive/negative subspace: here, a subspace $\cL\in\CC^n$ is called 
\emph{positive} if $[x,x]_A>0$ for all nonnull $x\in \cL$.

\begin{lemma}\label{l:oneherminter} 
Let $A\in M_n$ and $B\in M_k$ be two Hermitian matrices. Then, 
there exists a completely positive map $\phi$ with minimal Kraus (Choi) rank equal 
to $1$ and such that $\phi(A)=B$ if and only if $\kappa_\pm (B)\leq \kappa_\pm(A)$.
\end{lemma}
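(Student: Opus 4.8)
The plan is to invoke Proposition~\ref{p:minimal} to reduce the lemma to a congruence question for indefinite inner products, and then to settle the two implications by tracking the signatures $\kappa_\pm$. By Proposition~\ref{p:minimal}, a completely positive map $\phi$ has minimal Kraus (equivalently, Choi) rank equal to $1$ precisely when $\phi(\cdot)=V^*(\cdot)V$ for a single nonzero operation element $V\colon\CC^k\ra\CC^n$. Thus the existence of a Choi-rank-one $\phi$ with $\phi(A)=B$ is equivalent to the existence of a nonzero $V\colon\CC^k\ra\CC^n$ with $V^*AV=B$. The observation that drives everything is that, for $x,y\in\CC^k$,
\begin{equation*} \langle V^*AVx,y\rangle_{\CC^k}=\langle AVx,Vy\rangle_{\CC^n}=[Vx,Vy]_A,
\end{equation*}
so the equation $V^*AV=B$ says exactly that the $A$-indefinite inner product pulls back along $V$ to the $B$-indefinite inner product on $\CC^k$. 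In this language the numbers $\kappa_\pm$ are the natural invariants to compare, cf.\ \cite{GLR05}.

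For the necessity of the signature inequalities I would start from such a $V$ and choose a maximal $B$-positive subspace $\cL\subseteq\CC^k$, so that $\dim\cL=\kappa_+(B)$. For every nonzero $x\in\cL$ we have $[Vx,Vx]_A=\langle Bx,x\rangle>0$; in particular $Vx\neq0$, so $V$ restricted to $\cL$ is injective and $V(\cL)$ is a subspace of $\CC^n$ of dimension $\kappa_+(B)$ on which $[\cdot,\cdot]_A$ is strictly positive, i.e.\ an $A$-positive subspace. Since every $A$-positive subspace has dimension at most $\kappa_+(A)$, this forces $\kappa_+(B)\leq\kappa_+(A)$. Running the identical argument with a maximal $B$-negative subspace (equivalently, replacing $A,B$ by $-A,-B$) gives $\kappa_-(B)\leq\kappa_-(A)$.

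For the sufficiency I would reduce both forms to signature normal form. Writing $p=\kappa_+(B)$ and $q=\kappa_-(B)$, the law of inertia, together with the block decompositions \eqref{e:decoma}--\eqref{e:rep} (which exhibit $A_\pm$ and $B_\pm$ as positive definite on the respective spectral subspaces), provides invertible $S\in M_n$ and $T\in M_k$ with $S^*AS=J_A$ and $T^*BT=J_B$, where $J_A=I_{\kappa_+(A)}\oplus(-I_{\kappa_-(A)})\oplus 0$ and $J_B=I_p\oplus(-I_q)\oplus 0$. It then suffices to build $R\colon\CC^k\ra\CC^n$ with $R^*J_AR=J_B$ and to put $V=SRT^{-1}$, since then
\begin{equation*} V^*AV=(T^{-1})^*R^*(S^*AS)RT^{-1}=(T^{-1})^*(R^*J_AR)T^{-1}=(T^{-1})^*J_BT^{-1}=B.
\end{equation*}
The map $R$ is the obvious coordinate embedding: send the first $p$ standard basis vectors of $\CC^k$ into $p$ of the positive coordinate directions of $J_A$, the next $q$ into $q$ of the negative directions, and the rest to $0$. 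This is possible exactly because $p\leq\kappa_+(A)$ and $q\leq\kappa_-(A)$, and a one-line check gives $R^*J_AR=J_B$. As $S,T$ are invertible, $\rank(V)=\rank(R)=p+q=\rank(B)$, so whenever $B\neq0$ the constructed $V$ is nonzero and $\phi=V^*(\cdot)V$ is the desired Choi-rank-one interpolant.

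The step I expect to require the most care is the degenerate case $B=0$. Here the signature inequalities hold trivially, yet a Choi-rank-one map must be nonzero, so one needs a nonzero $V$ with $V^*AV=0$, i.e.\ a nonzero $A$-neutral vector in $\CC^n$. Using \eqref{e:rep}, such a vector exists as soon as $A$ fails to be positive or negative definite (take a vector in $\ker(A)$, or a suitable combination of vectors drawn from $\cH_A^+$ and $\cH_A^-$). Thus the construction goes through unchanged for every $B\neq0$, while for $B=0$ one records separately the existence of a nonzero neutral vector; this is the only configuration in which the bookkeeping of the kernels of $A$ and $B$ genuinely enters, and the one where I would be most careful to match the hypotheses to the conclusion.
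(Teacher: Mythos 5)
Your argument is correct and is essentially the paper's: the necessity direction (push a maximal $B$-positive subspace forward along $V$ to get an $A$-positive subspace of the same dimension, then repeat with the signs reversed) is word for word the proof in the text, and your sufficiency construction $V=SRT^{-1}$ via Sylvester normal forms is just a repackaging of the paper's explicit block operator \eqref{e:ve}, whose nonzero blocks are precisely the compositions $A_\pm^{-1/2}J_\pm B_\pm^{1/2}$ through isometries $J_\pm\colon\cH_B^\pm\ra\cH_A^\pm$. The one point where you go beyond the paper is the degenerate case $B=0$, and you are right to flag it: there the construction yields $V=0$, and when $A$ is positive (or negative) definite no nonzero $V$ with $V^*AV=0$ exists at all, so a Choi-rank-one interpolant genuinely fails to exist even though $\kappa_\pm(0)\leq\kappa_\pm(A)$; the paper's proof passes over this exceptional configuration silently, so your caveat is a real (if minor) correction to the statement rather than a defect of your argument.
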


\begin{proof} Assume that $B=V^*AV$ for some $V\in M_{k,n}$ and note that for all 
nonnull $x\in \cH_B^+$ we have
\begin{equation*}0< [x,x]_B=\langle Bx,x\rangle=\langle V^*AVx,x\rangle=\langle 
AVx,Vx\rangle=[Vx,Vx]_A,
\end{equation*} hence, the subspace $V\cH_B^+$ is $A$-positive, and this implies 
that $\kappa_+(B)\leq \kappa_+(A)$. Similarly, we have 
$\kappa_-(B)\leq \kappa_-(A)$.

Conversely, let assume that $\kappa_\pm(B)\leq \kappa_\pm(A)$, that is, 
$\dim(\cH_B^\pm)\leq \dim(\cH_A^\pm)$, and hence there exists isometric operators 
$J_\pm\colon \cH_B^\pm\ra \cH_A^\pm$. In addition to the decomposition 
\eqref{e:decoma} of $\CC^n$ with respect to $A$, we consider the like decomposition
of $\CC^k$ with respect to $B$
\begin{equation}\label{e:decomb} 
\CC^k=\cH_B^+\oplus \cH_B^-\oplus \ker(B),
\end{equation} and with respect to it, the block-matrix representation of $B$ similar
to \eqref{e:rep}.
 Then, with respect to \eqref{e:decoma} and \eqref{e:decomb} define 
$V\in M_{k,n}$ by
\begin{equation}\label{e:ve} V=\left[\begin{array}{lll}
B_+^{1/2}J_+A_+^{-1/2} & 0 & 0 \\
0 & B_-^{1/2}J_-A_-^{-1/2} & 0 \\
0 & 0 & 0 
\end{array}\right],
\end{equation}
and then, a simple calculation shows that $V^*AV=B$.
\end{proof}

\begin{theorem}\label{t:oneherminterp} 
Let $A\in M_n$ and $B\in M_k$ be two Hermitian matrices. Then the following 
assertions are equivalent:

\nr{i}There exists a 
completely positive map $\phi\colon M_n\ra M_k$ such that $\phi(A)=B$.

\nr{ii} If $A$ is semidefinite, then $B$ is semidefinite of the same type.

\nr{iii} There exists $m\in \NN$ such that
\begin{equation}\label{e:em} \kappa_\pm(B)\leq 
m\,\kappa_\pm(A).
\end{equation} 

In addition,
the minimal Kraus (Choi) rank $r$ of a completely positive map 
$\phi\colon M_n\ra M_k$ such that $\phi(A)=B$ is 
\begin{equation}\label{e:er} r=\min\{ m\in\NN\mid \kappa_\pm(B)\leq 
m\,\kappa_\pm(A)\}.
\end{equation}\end{theorem}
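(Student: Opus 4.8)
The plan is to reduce everything to the single rank-one interpolation already resolved in Lemma~\ref{l:oneherminter}, by an amplification trick. The key observation is that any completely positive $\phi\colon M_n\ra M_k$ written in Kraus form $\phi(A)=\sum_{j=1}^m V_j^*AV_j$, with operation elements $V_j\in M_{n,k}$, can be recast as $\phi(A)=W^*(I_m\otimes A)W$, where $W\colon\CC^k\ra\CC^{mn}$ is obtained by stacking $V_1,\ldots,V_m$ into a single column. Thus $\phi(A)=B$ is exactly a rank-one interpolation $W^*\widetilde A W=B$ for the inflated Hermitian matrix $\widetilde A=I_m\otimes A$, and since $\widetilde A$ is block diagonal with $m$ copies of $A$ one has $\kappa_\pm(I_m\otimes A)=m\,\kappa_\pm(A)$.

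With this in hand I would first prove (i)$\Leftrightarrow$(iii). For (i)$\Ra$(iii), given $\phi\in\mathrm{CP}(M_n,M_k)$ with $\phi(A)=B$, I fix a Kraus representation with $m$ operation elements, rewrite it as $W^*(I_m\otimes A)W=B$, and invoke the forward direction of Lemma~\ref{l:oneherminter} for the pair $(I_m\otimes A,B)$ to get $\kappa_\pm(B)\leq\kappa_\pm(I_m\otimes A)=m\,\kappa_\pm(A)$. For (iii)$\Ra$(i), given $m$ with $\kappa_\pm(B)\leq m\,\kappa_\pm(A)=\kappa_\pm(I_m\otimes A)$, the converse direction of Lemma~\ref{l:oneherminter} furnishes $W\colon\CC^k\ra\CC^{mn}$ with $W^*(I_m\otimes A)W=B$; splitting $W$ back into its $m$ blocks $V_1,\ldots,V_m$ exhibits a Kraus representation $\phi(A)=\sum_{j=1}^m V_j^*AV_j$ of a completely positive map with $\phi(A)=B$.

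The equivalence (ii)$\Leftrightarrow$(iii) is then a short inertia computation. If $A\geq 0$ then $\kappa_-(A)=0$, so (iii) forces $\kappa_-(B)=0$, i.e.\ $B\geq 0$; the case $A\leq 0$ is symmetric, giving (iii)$\Ra$(ii). Conversely, when $A$ is not semidefinite both $\kappa_\pm(A)$ are strictly positive, so some $m$ satisfies $\kappa_\pm(B)\leq m\,\kappa_\pm(A)$ automatically, while when $A$ is semidefinite assertion (ii) matches the vanishing signature of $A$ by the corresponding vanishing signature of $B$, and again (iii) holds.

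For the additional statement \eqref{e:er}, I would track the constant $m$ through the same two implications. By Proposition~\ref{p:minimal} a minimal Kraus representation of a completely positive $\phi$ with $\phi(A)=B$ uses exactly $\rho=\rank(\Phi_\phi)$ operation elements, and the (i)$\Ra$(iii) step then yields $\kappa_\pm(B)\leq\rho\,\kappa_\pm(A)$, so $\rho\geq r$ with $r$ the right-hand side of \eqref{e:er}. Running the (iii)$\Ra$(i) construction with $m=r$ produces a completely positive map realised with $r$ operation elements, hence of Choi rank at most $r$, so the bound is attained and the minimum equals $r$. I expect no single step to be a serious obstacle once the amplification identity is set up: the genuine content lies in recognising that a Kraus-rank-$m$ map is a rank-one map for the inflated datum $I_m\otimes A$, after which Lemma~\ref{l:oneherminter} carries the argument and the signatures bookkeep themselves.
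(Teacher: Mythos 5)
Your proof is correct, but it reaches the conclusion by a different reduction than the paper. The paper also rests everything on Lemma~\ref{l:oneherminter}, but it goes the other way around: for (iii)$\Ra$(i) it splits the \emph{output} $B$ into a sum $B=B_1+\cdots+B_r$ of Hermitian matrices with $\kappa_\pm(B_j)\leq\kappa_\pm(A)$ (by distributing the positive and negative eigenvalues of $B$ into $r$ groups), applies the lemma $r$ times to the pairs $(A,B_j)$, and sums the resulting rank-one maps; for (i)$\Ra$(iii) it uses the subadditivity of the signatures, $\kappa_\pm\bigl(\sum_j V_j^*AV_j\bigr)\leq\sum_j\kappa_\pm(V_j^*AV_j)\leq m\,\kappa_\pm(A)$. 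You instead inflate the \emph{input} to $I_m\otimes A$, observe that a Kraus-rank-$m$ map is a single congruence $B=W^*(I_m\otimes A)W$ with $W\in M_{mn,k}$ the stacked column of the operation elements, and apply the lemma exactly once in each direction, using $\kappa_\pm(I_m\otimes A)=m\,\kappa_\pm(A)$. The two routes are logically equivalent, but yours has the advantage of avoiding both the explicit decomposition of $B$ (which the paper asserts without detail) and the separate appeal to subadditivity of inertia, trading them for the trivial computation of the inertia of a block-diagonal inflation; the paper's version, on the other hand, directly exhibits the solution as a sum of $r$ rank-one maps without passing through a larger ambient space. Your treatment of (ii)$\Leftrightarrow$(iii) and of the minimality claim \eqref{e:er} via Proposition~\ref{p:minimal} matches the paper's in substance and is complete.
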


\begin{proof} It takes only a moment of thought to see that the assertions (ii) and (iii) 
are equivalent. Thus, it remains to prove that the assertions (i) and (iii) are 
equivalent. Assuming that there exists $m\in\NN$ satisfying \eqref{e:em}, 
let $r\in\NN$ be defined as in \eqref{e:er}. Then there exist Hermitian 
matrices $B_1,B_2,\ldots, B_r\in M_k$ such that 
$\kappa_\pm(B_j)\leq\kappa_\pm(A)$ for all $j=1,\ldots,r$. By 
Lemma~\ref{l:oneherminter} there exist $V_1,V_2,\ldots,V_r\in M_{k,n}$ 
such that $V_j^*AV_j=B_j$ for all $j=1,\ldots,r$. Then, letting 
$\phi=\sum_{j=1}^r V_j^*\cdot V_j\colon M_n\ra M_k$ we obtain a completely 
positive map such that $\phi(A)=B$.

On the other hand, if $V_1,V_2,\ldots,V_m\in M_{k,n}$ are such that 
$\sum_{j=1}^m V_j^*AV_j=B$ then for each $j=1,\ldots,m$ we have 
$k_\pm(V_j^*AV_j)\leq \kappa_\pm(A)$, hence 
$\kappa_\pm(B)\leq \sum_{j=1}^m \kappa_\pm(V_j^*AV_j)\leq m\kappa_\pm(A)$, 
hence $r\leq m$.
\end{proof}

Note that Theorem~\ref{t:oneherminterp} provides one more (different) argument for 
Corollary~3.2 in \cite{LiPoon}, and different from the argument given in 
Remark~\ref{r:texta}.(3) as well.

\end{document}